\documentclass[a4paper,12pt,reqno]{amsart}

\pdfoutput=1

\usepackage[margin=2.5cm, headsep=1cm, footskip=1cm]{geometry}
\usepackage[hang, flushmargin]{footmisc}
\usepackage[english]{isodate}
\usepackage{amsmath, amsthm, amssymb, mathtools}
\usepackage{ulem}
\usepackage{enumitem}
\usepackage{xcolor}
\usepackage{tikz, tikzsymbols, tikz-cd}
\usepackage{float}
\restylefloat{figure}
 \usepackage[cmtip,all]{xy}
\usepackage{hyperref}
\usepackage{multicol}
\usepackage[capitalise, nameinlink, noabbrev, nosort]{cleveref}

\definecolor{indigo}{HTML}{492DA5}

\allowdisplaybreaks[2]
\setenumerate{listparindent=\parindent}

\providecommand{\noopsort}[1]{}

\makeatletter\g@addto@macro\bfseries{\boldmath}\makeatother

\makeatletter
\let\origsection\section
\renewcommand\section{\@ifstar{\starsection}{\nostarsection}}
\newcommand\sectionspace{\vspace{0.5ex}}
\newcommand\nostarsection[1]{\sectionspace\origsection{#1}\sectionspace}
\newcommand\starsection[1]{\sectionspace\origsection*{#1}\sectionspace}
\makeatother

\crefname{page}{page}{pages}

\setlist[enumerate]{font=\normalfont}
\crefname{enumi}{}{}
\crefname{enumii}{}{}

\numberwithin{equation}{section}
\crefname{equation}{equation}{equations}

\crefname{condition}{condition}{conditions}
\creflabelformat{condition}{#2(#1)#3}

\newtheorem{theorem}{Theorem}[section]

\newtheorem{thm}[theorem]{Theorem}
\crefname{thm}{Theorem}{Theorems}

\newtheorem{lemma}[theorem]{Lemma}
\crefname{lemma}{Lemma}{Lemmas}

\crefname{prop}{Proposition}{Propositions}

\newtheorem{cor}[theorem]{Corollary}
\crefname{cor}{Corollary}{Corollaries}

\theoremstyle{definition}

\crefname{definition}{Definition}{Definitions}

\theoremstyle{remark}

\newtheorem{remark}[theorem]{Remark}
\crefname{remark}{Remark}{Remarks}

\crefname{remarks}{Remarks}{Remarks}

\crefname{example}{Example}{Examples}

\newcommand{\vecspan}{\operatorname{span}}

\begin{document}

\title{Uniqueness Theorems for Twisted Steinberg Algebras}

\author{Rizalyn S. Bongcawel}
\address[R.S.~Bongcawel]{Department of Mathematics and Statistics, College of Science and Mathematics, and Center for Mathematical and Theoretical Physical Sciences, Premier Research Institute of Science and Mathematics, Mindanao State University-Iligan Institute of Technology,
Tibanga, Iligan City, THE PHILIPPINES}
\email{rizalyn.bongcawel@g.msuiit.edu.ph}

\author{Lyster Rey B. Cabardo}
\address[L.R.B.~Cabardo]{Department of Mathematics and Statistics, College of Science and Mathematics, and Center for Mathematical and Theoretical Physical Sciences, Premier Research Institute of Science and Mathematics, Mindanao State University-Iligan Institute of Technology,
Tibanga, Iligan City, THE PHILIPPINES}
\email{lysterrey.cabardo@g.msuiit.edu.ph}

\author{Lisa Orloff Clark}
\address[L.O.~Clark]{School of Mathematics and Statistics, Victoria University of Wellington, PO Box 600, Wellington 6140, NEW ZEALAND}
\email{lisa.orloffclark@vuw.ac.nz}

\thanks{This research is part of the PhD thesis of the first-named author supported by DOST-ASTHRDP. The first-named author thanks the third-named author and Te Herenga Waka, Victoria University of Wellington for their hospitality and for appointing her as a research visiting scholar. The second-named author thanks the Office of the Vice Chancellor for Research and Enterprise through the Office of Research Management of MSU-IIT for the support}

\subjclass[2020]{16S99 (primary), 22A22 (secondary)} \keywords{ample groupoid, Steinberg algebra, Twisted Steinberg algebra, uniqueness theorem}

\begin{abstract}
Given an ample Hausdorff groupoid $G$, a unital commutative ring $R$, and a discrete twist $(\Sigma,i,q)$, we establish a generalised uniqueness theorem for the twisted Steinberg algebra $A_R(G;\Sigma)$.  By applying this theorem when $G$ is effective, we establish a  Cuntz-Krieger uniqueness theorem as a corollary. We also prove a generalised graded uniqueness theorem for $A_R(G;\Sigma)$.
\end{abstract}

\maketitle 

\section{Introduction}

Uniqueness theorems give checkable conditions under which a homomorphism between rings/algebras is injective.  There are numerous uniqueness theorem results in analysis for C*-algebras, see for example  \cite{Armstrong2022,BCFS2014,BNR,Fl,GLR,HKLQ,Kak,Kang,KPRR1997,RSY, Szy2002}. These give inspiration to analogous theorems in ring theory, including for Leavitt path algebras associated to directed graphs \cite{LPA2005,TOMFORDE2007270}, their higher-rank analogues Kumjian–Pask algebras in \cite{ACaHR, CaHF, CLARK2017364}, and more general Steinberg algebras arising from suitable groupoids in \cite{CE2015, CFST2014, GUTSA}.
Such results are particularly useful when identifying one class of algebras as a subclass of another. For instance, the fact that every Leavitt path algebra can be realised as a Steinberg algebra follows from the graded uniqueness theorem, see for example \cite[Example~3.2]{CS2015}.

In this paper, we work in the setting of twisted Steinberg algebras \cite{ACCCLMRSS,ACCLMR} and establish both a generalised uniqueness theorem and graded uniqueness theorem. Our results extend and unify existing cocycle-based uniqueness theorems, which have previously only been considered under an effectivity assumption on the underlying groupoid \cite[Theorem~6.1 and Theorem~7.2]{ACCLMR}. By removing this restriction and allowing more general twists, we broaden the range of algebras to which these techniques apply. In particular, our results recover all the aforementioned untwisted uniqueness theorems in ring theory, as well as their graded analogues, as special cases. 

Our approach combines techniques developed in these special cases with ideas from the C*-algebraic setting, as in \cite{Armstrong2022}. Adapting these arguments to the purely algebraic context requires care, since the analytic approach relies on states and the disintegration of representations, which are tools that are not available in our setting. 

In \cref{sec2}, we give the necessary preliminaries on the construction of twisted Steinberg algebras as in \cite{ACCCLMRSS, ACCLMR} and graded twisted Steinberg algebras as in \cite{naingue}. In \cref{sec3}, we established the generalised and Cuntz-Krieger uniqueness theorem \cref{CK1}. In \cref{sec4}, we prove a generalised graded uniqueness theorem and present a corollary where we add the assumption that the underlying groupoid corresponding to the homogeneous component with degrees $\epsilon$ is effective.

\section{Preliminaries}
\label{sec2}
In this section, we recall how a Steinberg algebra is built from an ample groupoid as independently introduced in \cite{CS2015} and \cite{Steinberg2010}.  We then consider more general twisted Steinberg algebras as in \cite{ACCLMR} and \cite{ACCCLMRSS}, and graded twisted Steinberg algebras as in \cite{naingue}.

\bigskip

\noindent \textbf{Ample groupoids}:  A \textit{groupoid} $G$ is a small category in which every morphism is invertible. A detailed definition is found, for example, in \cite{Sims2020}.  We denote the \textit{composable pairs}  in a groupoid $G$ by $G^{(2)}\subseteq G \times G$, and the \textit{range} and \textit{source} maps by $r,s:G \to G$ where  $s(\alpha)=\alpha^{-1}\alpha$ and  $r(\alpha)=\alpha\alpha^{-1}$, for all $\alpha \in G$.  We write  $G^{(0)} := s(G) = r(G)$ for the \textit{unit space} of $G$. A \textit{topological groupoid} is a groupoid $G$ that is also a topological space and the topology is compatible with its structure, that is, inversion and composition are continuous maps. We say the groupoid is \textit{\'etale} if the source (or equivalently the range) map is a local homeomorphism. An open subset $U$ of $G$ is called an \textit{open bisection} if $s|_U$ and $r|_U$ are homeomorphisms onto open subsets of $G^{(0)}$. An \'etale groupoid $G$ is \textit{ample} if it has a basis of compact open bisections. 

Denote $G_x = r^{-1}(x)$, $G^x = s^{-1}(x)$, and $G^y_x = {G_x \cap G^y}$. The \textit{isotropy} of a groupoid $G$ is the set $$\mathrm{Iso}(G):= \{ \gamma \in G: r(\gamma)= s(\gamma)\} = \bigcup\limits_{x\in G^{(0)}} {G_x^x}.$$ If the interior of the isotropy, $\text{Iso}(G)^\circ$, is equal to $G^{(0)}$, then we call $G$ \textit{effective}. 

\medskip

 \noindent \textbf{Steinberg algebras}:  Let $G$ be an ample Hausdorff groupoid  and $R$ be  a unital commutative ring. Given a topological space 
$X$ and a ring $R$, the \textit{open support} of a function $f:X \to R$ is the set 
 $$\mathrm{supp}(f):=\{x \in X: f(x) \neq 0\} = f^{-1}(R\backslash \{0\}).$$ We say that 
 $f$ is \textit{compactly supported} if ${\mathrm{supp}(f)}$ is contained in a compact set. 
The \textit{Steinberg algebra} $A_R(G)$ of $G$ over $R$  is the set of all locally constant, compactly supported  functions from $G$ to $R$.  It is generated by the characteristic functions, denoted $1_U$ where $1_U:G \to R$ and takes value 1 on $U$ and 0 otherwise,  so that 
$$A_R(G)=\vecspan\{1_U |~ U~\text{is a compact open bisection of }G\}.$$  \textit{Convolution} is defined such that for $f,g \in A_R(G)$ and $\gamma \in G$ we have  \begin{equation*}
 	(f*g)(\gamma):=  \sum\limits_{\substack {(\alpha,\beta) \in G^{(2)},\\ \alpha \beta =\gamma}} f(\alpha)g(\beta)= \sum_{\eta \in G^{r(\gamma)}} f(\gamma \eta)g(\eta^{-1}).
 \end{equation*}

 \noindent \textbf{Twisted convolution and discrete twists}:  In \cite{ACCLMR} and \cite{ACCCLMRSS}, a Steinberg algebra is generalised in two ways.  First, using a continuous $2$-cocyle $\sigma: G^{(2)} \to T \leq R^\times$ to ``twist'' the convolution. This produces a twisted Steinberg algebra $A_R(G;\sigma)$ which is identical to the untwisted one as an $R$-module but with convolution product defined such that for $f,g \in A_R(G)$ and $x \in G$ we have \begin{equation*}
		(f *_\sigma g)(\gamma) := \sum\limits_{\substack {(\alpha,\beta) \in G^{(2)},\\ \alpha \beta =\gamma}} \sigma(\alpha,\beta)f(\alpha)g(\beta) = \sum_{\eta \in G^{r(\gamma)}} \sigma(\gamma\eta,\eta^{-1}) f(\gamma\eta)g(\eta^{-1}),
	\end{equation*} for every $\gamma \in G$.

    Second, the groupoid itself is  ``twisted". For this we define a \textit{discrete twist} by $R^{\times}$, the group of multiplicative units of $R$, over an ample Hausdorff groupoid $G$ as a sequence 
		$G^{(0)} \times R^{\times} \overset{i}{\hookrightarrow} \Sigma \overset{q}{\hookrightarrow} G$,
	where the groupoid $G^{(0)} \times R^{\times}$ is regarded as trivial group bundle with fibres $R^{\times}$, $\Sigma$ is a Hausdorff \'etale groupoid with $\Sigma^{(0)} = i(G^{(0)} \times \{1\})$, and $i$ and $q$ are continuous groupoid homomorphisms that restrict to homeomorphisms of unit spaces. Furthermore the following conditions are satisfied:
    \begin{enumerate}
		\item[(DT1)] The sequence is \textit{exact}, in the sense that $i(\{x\} \times R^{\times})=q^{-1}(x)$ for every $x\in G^{(0)}$, $i$ is injective, and $q$ is surjective.
		\item[(DT2)] The groupoid $\Sigma$ is a \textit{locally trivial $G$-bundle}, in the sense that for each $\alpha \in G$, there is an open bisection $B_{\alpha}$ of $G$ containing $\alpha$, and a continuous map $P_{\alpha}: B_{\alpha} \to \Sigma$ such that 
		\begin{itemize}
			\item[(i)] $q \circ P_{\alpha} = \mathrm{id}_{B_{\alpha}}$;
			\item[(ii)] the map $(\beta, t) \mapsto i(r(\beta,t))P_{\alpha}(\beta)$ is a homeomorphism from $B_{\alpha}\times R^{\times}$ to $q^{-1}(B_{\alpha})$. 
		\end{itemize}   
		\item[(DT3)] The image of $i$ is \textit{central} in $\Sigma$, in the sense that for all $\sigma \in \Sigma$ and $t \in R^{\times}$, $i(r(\sigma), t)\sigma = \sigma~i(s(\sigma), t)$ .  
	\end{enumerate}
    We will denote a discrete twist over $G$ by $(\Sigma,i,q)$) or simply $\Sigma$. If $G$ is ample, then $\Sigma$ is ample. 
Given a discrete twist $\Sigma$, there is a continuous free action of $R^{\times}$ on $\Sigma$ (see \cite{ACCLMR})  given by $$t \cdot \sigma = i(r(\sigma),t)\sigma~\text{for all}~t \in R^{\times}~\text{and}~\sigma \in \Sigma.$$ 

    \noindent \textbf{Twisted Steinberg algebras}:  Given a topological space $X$ and a ring $R$, the $R$-module of locally constant functions from $X$ to $R$ is written as $C(X,R)$.  For every $f \in C(X,R)$, $\text{supp}(f)$ is a clopen set and we say $f$ is \textit{$R^\times$-contravariant} if $f(t \cdot \sigma)=t^{-1}f(\sigma)$.  The twisted Steinberg algebra of a discrete twist $(\Sigma, i, q)$ over $G$ is the set $$A_R(G;\Sigma) := \{ f \in C(\Sigma,R): f~\text{is}~ R^\times~\text{contravariant and}~q(\text{supp}(f))~\text{is compact}\}.$$ Convolution is defined using a section $S:G \to \Sigma$, not necessarily continuous,  and is independent of the choice of section.  More precisely, if $S$ is a section, $f,g \in A_R(G;\Sigma)$ and $\sigma \in \Sigma$, convolution is given by
    $$(f * g)(\sigma) := \sum\limits_{\alpha\in G^{r(\sigma)}\cap \text{supp}_G(f)} f(S(\alpha))g(S(\alpha)^{-1}\sigma).$$
    
Similar to the untwisted Steinberg algebra, in a sense, the twisted Steinberg algebra is generated by functions on compact open bisections.  For a compact open bisection $X \subseteq \Sigma$, define $\tilde{1}_X:\Sigma \to R$ such that for $\sigma \in \Sigma$ 
\[
\tilde{1}_{X}(\sigma) = 
\begin{cases}
t^{-1}, & \text{if}~\sigma \in tX,~\text{for some}~t \in R^\times,\\
0, & \text{otherwise}.
\end{cases}
\]
Then by \cite[Proposition~2.8]{ACCCLMRSS}
\[A_R(G;\Sigma) = \vecspan \{\tilde{1}_{X} \mid X \text{ is a compact open bisection of } \Sigma\}.\]

\bigskip

\noindent \textbf{Graded twists and graded twisted Steinberg algebras}: Let  $\Gamma$ be a discrete group with identity $\epsilon$. A \textit{$\Gamma$-graded discrete twist} is a discrete twist over $G$ given by the sequence $G^{(0)} \times R^{\times} \overset{i}{\hookrightarrow} \Sigma \overset{q}{\hookrightarrow} G$  together with continuous groupoid homomorphisms $c_\Sigma: \Sigma \to \Gamma$ and $c_G: G \to \Gamma$ such that the following diagram commutes \[
	\xymatrix@1{
		G^{(0)} \times T \ar[r]^-{i} ~ &~ \Sigma \ar[r]^{q} \ar[dr]_{c_{\Sigma}}  & G \ar[d]^{c_{G}~~.} \\
		&& \Gamma
	}
	\]
The maps $c_G$ and $c_\Sigma$ are continuous $\Gamma$-gradings on $G$ and $\Sigma$, respectively. Hence, $G_\gamma \coloneqq c_G^{-1}(\gamma)$ and $\Sigma_\gamma \coloneqq c_\Sigma^{-1}(\gamma)$ are clopen sets and $G_\epsilon$ is a clopen subgroupoid of $G$. It is shown in \cite[Proposition~4.2]{naingue} that graded discrete twists give rise to a grading on the twisted Steinberg algebra, more specifically $A_R(G;\Sigma)$ is a $\Gamma$-graded $R$-algebra with homogeneous components given by $A_R(G;\Sigma)_\gamma :=\{f \in A_R(G;\Sigma):~\text{supp}_G(f) \subseteq G_\gamma\}$ where $A_R(G_\epsilon;\Sigma_\epsilon)$ can be identified with $A_R(G;\Sigma)_\epsilon$.

\section{A generalised Uniqueness Theorem}
\label{sec3}
In this section, we prove a generalised uniqueness theorem for twisted Steinberg algebras. This improves on  the general uniqueness theorem for untwisted Steinberg algebras \cite[Theorem~3.1]{GUTSA}.  We then prove a Cuntz-Krieger uniqueness theorem as a special case.  Since twisted Steinberg algebras $A_R(G;\Sigma)$ generalise Steinberg algebras $A_R(G;\sigma)$ twisted by a 2-cocycle $\sigma$, our theorem gives a generalised uniqueness theorem for these as well.  We also recover the Cuntz-Kreiger uniqueness theorem  for cocycle twisted algebras $A_R(G;\sigma)$, which is \cite[Theorem~6.1]{ACCLMR}.
The following lemma is important and widely used to prove results in this paper. 
\begin{lemma}\label{lemma1}
     Let $G$ be an ample Hausdorff groupoid, let $R$ be a unital commutative ring, let $(\Sigma,i,q)$ be a discrete twist over $G$ and let $\mathcal{F}$ be a finite collection of compact open bisections of $\Sigma$ with mutually disjoint images in $G$. For any $D_1, D_2 \in \mathcal{F}$ with $D_1 \not= D_2$, $D_2^{-1}D_1 \cap i(G^{(0)} \times R^{\times}) = \varnothing.$
\end{lemma}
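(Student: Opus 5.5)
We argue by contradiction. Suppose $\tau \in D_2^{-1}D_1 \cap i(G^{(0)} \times R^{\times})$, so $\tau = \delta_2^{-1}\delta_1$ for some $\delta_1 \in D_1$ and $\delta_2 \in D_2$ with $(\delta_2^{-1},\delta_1)$ composable. Since $\tau$ lies in the image of $i$, we can write $\tau = i(x,t)$ for some $x \in G^{(0)}$ and $t \in R^\times$.

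The plan is to push everything down to $G$ via $q$. Since $q$ is a groupoid homomorphism, $q(\tau) = q(\delta_2)^{-1}q(\delta_1)$. By exactness (DT1), $q(i(x,t)) = x$ is a unit, since $i(\{x\}\times R^\times) = q^{-1}(x)$. Hence $q(\delta_2)^{-1}q(\delta_1) = x \in G^{(0)}$. Composability gives $r(q(\delta_1)) = r(q(\delta_2))$. The product $q(\delta_2)^{-1}q(\delta_1)$ is a unit only if $q(\delta_1) = q(\delta_2)$: multiply on the left by $q(\delta_2)$ and use $q(\delta_2)x = q(\delta_2)$, since $x = s(q(\delta_2))$.

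So $q(\delta_1) = q(\delta_2)$ lies in $q(D_1) \cap q(D_2)$. The hypothesis that the members of $\mathcal{F}$ have mutually disjoint images in $G$ means $q(D_1)\cap q(D_2) = \varnothing$ whenever $D_1 \neq D_2$. This is a contradiction, so the intersection is empty.

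The only point needing care is interpreting ``mutually disjoint images'' as pairwise disjointness of the sets $q(D)$ for distinct $D\in\mathcal{F}$. We also need the fact that $q\circ i$ sends $(x,t)$ to $x$, which follows because $i$ and $q$ are groupoid homomorphisms that are homeomorphisms on unit spaces, together with (DT1). Otherwise the argument is a direct computation, and I expect no real obstacle.
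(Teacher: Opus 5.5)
Your proposal is correct and matches the paper's proof: both argue by contradiction, use that $q$ is a homomorphism to get $q(\delta_2)^{-1}q(\delta_1)\in G^{(0)}$, conclude $q(\delta_1)=q(\delta_2)$, and contradict the disjointness of the images. You also spell out two steps the paper leaves implicit: that $q(i(x,t))=x$ follows from (DT1), and why a unit product forces $q(\delta_1)=q(\delta_2)$.
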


\begin{proof}  Suppose, by way of contradiction, that there exists $\gamma \in D_2^{-1}D_1 \cap i(G^{(0)} \times R^{\times})$.
    Then it can be written as $\gamma = \alpha^{-1}\beta$ where $\alpha \in D_2$ and $\beta \in D_1$. Since $\gamma \in i(G^{(0)} \times R^{\times})$,
  \[q(\gamma) = q(\alpha^{-1}\beta) \in G^{(0)}.\]  
    Since $q$ is a homomorphism, we have $q(\beta) = q(\alpha)$. But elements of $\mathcal{F}$ have mutually disjoint images in $G$; consequently $D_1=D_2$, which is a contradiction.
\end{proof}
For our generalised uniqueness theorem, we show that injectivity of a homomorphism on a twisted Steinberg algebra can be determined by checking injectivity only on functions supported on the interior of the isotropy. The next lemma is \cite[Lemma~3.6]{BCF} which we use in \cref{cor:intisotwist} to show that the interior of the isotropy of $G$ is actually a twist over the interior of the isotropy of $\Sigma$.

\begin{lemma}\cite[Lemma~3.6]{BCF}\label{lem:BCF}
    Let $R$ be a commutative ring with identity and $(\Sigma,i,q)$ be a discrete twist over an ample Hausdorff groupoid $G$. Let $H$ be an open subgroupoid of $G$ and let $\Sigma|_H=q^{-1}(H)$. Then, restricting the maps $i$ and $q$, we get a discrete twist $$H^{(0)} \times R^\times \to \Sigma|_H\to H.$$ Furthermore, $A_R(H;\Sigma|_H)$ is a subalgebra of $A_R(G;\Sigma)$. 
\end{lemma}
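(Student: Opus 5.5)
The plan has two parts: first verify that restriction gives a discrete twist over $H$, then show that $A_R(H;\Sigma|_H)$ sits inside $A_R(G;\Sigma)$ as a subalgebra.

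\textbf{The restriction is a discrete twist.} Since $H$ is an open subgroupoid of $G$ and $q$ is a continuous homomorphism, $\Sigma|_H=q^{-1}(H)$ is an open subgroupoid of $\Sigma$. It is closed under products and inverses because $q$ is a homomorphism. Hence it is Hausdorff and étale, and its unit space is $q^{-1}(H^{(0)})\cap\Sigma^{(0)}=i(H^{(0)}\times\{1\})$. By (DT1), $i(\{x\}\times R^\times)=q^{-1}(x)$, so for $x\in H^{(0)}$ we get $i(H^{(0)}\times R^\times)\subseteq \Sigma|_H$. The restrictions of $i$ and $q$ are therefore well defined continuous homomorphisms. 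Exactness (DT1) and centrality (DT3) are inherited pointwise.

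For local triviality (DT2), given $\alpha\in H$, I would replace $B_\alpha$ by $B_\alpha\cap H$, which is an open bisection of $H$ containing $\alpha$, and restrict $P_\alpha$ to it. The map $(\beta,t)\mapsto i(r(\beta),t)P_\alpha(\beta)$ restricts to a homeomorphism from $(B_\alpha\cap H)\times R^\times$ onto $q^{-1}(B_\alpha\cap H)$. This holds because that set is the preimage of $q^{-1}(B_\alpha\cap H)$ under the original homeomorphism, using $q(i(r(\beta),t)P_\alpha(\beta))=\beta$.

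\textbf{The subalgebra inclusion.} Take $f\in A_R(H;\Sigma|_H)$ and extend it by zero to a function $\tilde f$ on $\Sigma$. The main point is that $\tilde f$ is locally constant. On the open set $\Sigma|_H$ this is clear. Off it, I would use that $K=q(\mathrm{supp} f)$ is compact in $H$, hence compact in $G$, and therefore closed because $G$ is Hausdorff. Then $q^{-1}(K)$ is closed, and $\mathrm{supp}\,\tilde f=\mathrm{supp} f$. This set is clopen in $\Sigma|_H$, so it is open in $\Sigma$. It is also closed in $\Sigma$, since it is closed in $q^{-1}(K)$; to see this, write it as $\mathrm{supp} f = (\Sigma|_H\setminus \text{open})\cap q^{-1}(K)$ and note that $q^{-1}(K)\subseteq\Sigma|_H$. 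A clopen support makes $\tilde f$ locally constant: it is zero on the open complement of the support and locally constant on the support itself.

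Contravariance of $\tilde f$ is immediate, because $\Sigma|_H$ is invariant under the $R^\times$-action. The extension map is $R$-linear and injective.

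For multiplicativity, I would choose a section $S$ of $q$ on $G$ that sends $H$ into $\Sigma|_H$. In the convolution formula for $\tilde f*\tilde g$ at $\sigma$, the sum runs over $\alpha\in\mathrm{supp}_G(f)\subseteq H$. A summand is nonzero only if $S(\alpha)^{-1}\sigma\in\Sigma|_H$, which forces $\sigma\in\Sigma|_H$. For $\sigma\in\Sigma|_H$, the sum coincides with the convolution computed in $A_R(H;\Sigma|_H)$. So extension by zero commutes with convolution, because convolution is independent of the chosen section.

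I expect the main obstacle to be the topological step: showing that $\mathrm{supp} f$ is closed in $\Sigma$. This is exactly where the Hausdorff hypothesis on $G$ and the compactness of $q(\mathrm{supp} f)$ are needed.
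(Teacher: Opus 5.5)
Your proof is correct and complete. The paper gives no argument for this lemma; it imports the statement from \cite[Lemma~3.6]{BCF}, so there is no in-paper proof to compare with, and your direct verification stands as a self-contained proof.

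The individual steps hold up:
\begin{itemize}
\item For (DT2), writing $\phi(\beta,t)=i(r(\beta),t)P_\alpha(\beta)$, the identity $q(\phi(\beta,t))=\beta$ gives $\phi^{-1}(q^{-1}(B_\alpha\cap H))=(B_\alpha\cap H)\times R^\times$.
\item The computation $q^{-1}(H)\cap\Sigma^{(0)}=i(H^{(0)}\times\{1\})$ also shows that $i$ and $q$ restrict to homeomorphisms of unit spaces, as the definition requires.
\item Hausdorffness of $G$ is used exactly where it is needed, namely to make $q^{-1}(K)$ closed in $\Sigma$.
\end{itemize}

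Three small remarks.
\begin{itemize}
\item You do not need to \emph{choose} a special section. Any section $S$ of $q$ automatically satisfies $S(H)\subseteq q^{-1}(H)=\Sigma|_H$, and $S|_H$ is then a section of the restricted twist.
\item Local constancy of the extension by zero follows more quickly without discussing clopen supports. Since $q^{-1}(K)\subseteq\Sigma|_H$, the open sets $\Sigma|_H$ and $\Sigma\setminus q^{-1}(K)$ cover $\Sigma$. On the first, $\tilde f=f$; on the second, $\tilde f=0$.
\item You could state explicitly that $H$ is again ample and Hausdorff, since the compact open bisections of $G$ contained in $H$ form a basis. The definition of a discrete twist in the paper is stated only over ample Hausdorff groupoids.
\end{itemize}
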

\begin{remark}
    Since $G$ is an ample Hausdorff groupoid, inversion is open and the composition map is a homeomorphism on products of composable bisections, that is, an open map as well. Then 
    the subset $\text{Iso}(G)^\circ$ of $G$ is a subgroupoid of $G$ and is clearly open. Since $G^{(0)} \subseteq \text{Iso}(G)$ and $G^{(0)}$ is an open subgroupoid of $G$, $(\text{Iso}(G)^\circ)^{(0)}=G^{(0)}.$ We implicitly used this fact in the following result.
\end{remark}
\begin{cor}\label{cor:intisotwist}
    Let $G$ be an ample Hausdorff groupoid, let $R$ be a unital commutative ring, and let $(\Sigma,i,q)$ be a discrete twist over $G$. Then $$G^{(0)} \times R^\times \to \text{Iso}(\Sigma)^\circ\to \text{Iso}(G)^\circ$$ is a discrete twist over $\text{Iso}(G)^\circ$. Moreover, $A_R(\text{Iso}(G)^\circ;\text{Iso}(\Sigma)^\circ)$ is a subalgebra of $A_R(G;\Sigma)$.
\end{cor}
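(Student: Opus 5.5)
The plan is to apply \cref{lem:BCF} with $H=\text{Iso}(G)^\circ$. By the preceding remark, $H$ is an open subgroupoid of $G$ with $H^{(0)}=G^{(0)}$. So \cref{lem:BCF} immediately gives a discrete twist
\[
G^{(0)}\times R^\times \to q^{-1}(\text{Iso}(G)^\circ)\to \text{Iso}(G)^\circ,
\]
and it also shows that $A_R(\text{Iso}(G)^\circ;q^{-1}(\text{Iso}(G)^\circ))$ is a subalgebra of $A_R(G;\Sigma)$. What remains is to prove the set equality
\[
q^{-1}(\text{Iso}(G)^\circ)=\text{Iso}(\Sigma)^\circ.
\]

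\textbf{The inclusion $\supseteq$.} Let $\sigma\in\text{Iso}(\Sigma)^\circ$ and choose an open set $V$ with $\sigma\in V\subseteq\text{Iso}(\Sigma)$. Since $q$ is a homomorphism that restricts to a homeomorphism on unit spaces, $r(q(\tau))=q(r(\tau))=q(s(\tau))=s(q(\tau))$ for every $\tau\in V$, so $q(V)\subseteq\text{Iso}(G)$. We then need $q$ to be an open map. This follows from (DT2): locally $q^{-1}(B_\alpha)\cong B_\alpha\times R^\times$ with $R^\times$ discrete, and under this identification $q$ is the projection onto $B_\alpha$, which is open. Hence $q(V)$ is open, so $q(\sigma)\in\text{Iso}(G)^\circ$.

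\textbf{The inclusion $\subseteq$.} Let $q(\sigma)\in\text{Iso}(G)^\circ$. The set $U=q^{-1}(\text{Iso}(G)^\circ)$ is open because $q$ is continuous, and it contains $\sigma$. It therefore suffices to show $U\subseteq\text{Iso}(\Sigma)$. Take $\tau\in U$. Then $q(r(\tau))=r(q(\tau))=s(q(\tau))=q(s(\tau))$. Both $r(\tau)$ and $s(\tau)$ lie in $\Sigma^{(0)}$, and $q$ is injective on $\Sigma^{(0)}$ since it restricts to a homeomorphism of unit spaces. Hence $r(\tau)=s(\tau)$, so $U\subseteq\text{Iso}(\Sigma)$, and thus $U\subseteq\text{Iso}(\Sigma)^\circ$.

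This gives the set equality, and substituting it into the conclusion of \cref{lem:BCF} proves both claims of \cref{cor:intisotwist}. The only step that is not immediate is the openness of $q$. I expect to handle it via the local triviality (DT2)(ii) together with the discreteness of $R^\times$ as described above. Alternatively, since $\Sigma$ is étale, one can use the fact that $q$ is a continuous homomorphism that is a homeomorphism on unit spaces and so carries open bisections to open bisections.
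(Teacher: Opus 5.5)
Your proposal is correct and follows the paper's proof. Both arguments establish $q^{-1}(\text{Iso}(G)^\circ)=\text{Iso}(\Sigma)^\circ$ using that $q$ detects isotropy (being injective on units) together with continuity and openness of $q$, and then apply \cref{lem:BCF} to $H=\text{Iso}(G)^\circ$. The differences are minor: the paper cites \cite[Proposition~2.2]{ACCCLMRSS} for openness of $q$ where you derive it from (DT2), and you correctly credit continuity of $q$, rather than openness, for why $q^{-1}(\text{Iso}(G)^\circ)$ is open.
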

\begin{proof}We show that $q^{-1}(\text{Iso(G)}^\circ)=\text{Iso}(\Sigma)^\circ$. Since $q$ is a homeomorphism on unit spaces, $\sigma \in \text{Iso}(\Sigma)$ if and only if $q(\sigma) \in \text{Iso(G)}$. Since $q$ is an open map  by \cite[Proposition~2.2]{ACCCLMRSS} and $\text{Iso}(G)^\circ$ is open, we have that $q^{-1}(\text{Iso}(G)^\circ)$ is open and $q^{-1}(\text{Iso}(G)^\circ) \subseteq \text{Iso}(\Sigma)^\circ$. Similarly, since $\text{Iso}(\Sigma)^\circ$ is open and $q$ is an open map, $\text{Iso}(\Sigma)^\circ \subseteq q^{-1}(\text{Iso}(G)^\circ$. Thus the result follows from \cref{lem:BCF}.
\end{proof}

The next lemma is a key technical step in the proof of the generalised uniqueness theorem. It provides a localisation property that allows a nonzero element in $A_R(G;\Sigma)$ to be compressed to a nonzero element supported entirely on the isotropy interior. Although the argument is inspired by  \cite[Lemma~3.3]{GUTSA}, we verify here that the compression technique is carried over to the discrete twist setting.
\begin{lemma}\label{afterdense}
    Let $G$ be an ample Hausdorff groupoid, let $R$ be a unital commutative ring, and let $(\Sigma,i,q)$ be a discrete twist over $G$. Suppose $u \in \Sigma^{(0)}$ is such that $\Sigma_u^u \subseteq \text{Iso}(\Sigma)^\circ$ and take $f \in A_R(G;\Sigma)$ such that there exists $\gamma_u \in \Sigma_u^u$ with $f(\gamma_u)\not=0$. Then there exists a compact open set $K \subseteq \Sigma^{(0)}$ such that $u \in K$ and $$0 \not = \tilde{1}_K f \tilde{1}_K \in \iota(A_R(\text{Iso}(G)^\circ; \text{Iso}(\Sigma)^\circ)$$ where $\iota$ is the natural inclusion of $A_R(\text{Iso}(G)^\circ; \text{Iso}(\Sigma)^\circ)$ in $A_R(G;\Sigma)$.
    \end{lemma}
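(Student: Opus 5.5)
We follow the compression argument of \cite[Lemma~3.3]{GUTSA}, carried out in the twisted setting. First we write $f$ in a normal form: by \cite[Proposition~2.8]{ACCCLMRSS} and a standard disjointification (refining a finite cover of $q(\mathrm{supp}(f))$ by compact open bisections of $G$ and lifting via the local sections from (DT2)), we write
\[
f=\sum_{D\in\mathcal{F}} r_D\,\tilde{1}_D ,
\]
where $\mathcal{F}$ is a finite collection of compact open bisections of $\Sigma$ with mutually disjoint images $q(D)$ in $G$, and $r_D\in R$. Each $\sigma$ then lies in at most one set $R^\times D$. Since $f(\gamma_u)\neq 0$, there is a unique $D_0\in\mathcal{F}$ with $\gamma_u\in R^\times D_0$, and $r_{D_0}\neq 0$. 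Rescaling $D_0$ by a unit (which only changes the coefficient by a unit), we may assume $\gamma_u\in D_0$.

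Next we compute the compression. For a compact open $K\subseteq\Sigma^{(0)}$ we have $\tilde{1}_K\tilde{1}_D\tilde{1}_K=\tilde{1}_{KDK}$, and $KDK=\{\sigma\in D: r(\sigma),s(\sigma)\in K\}$ is again a compact open bisection. We choose $K$ in three steps.
\begin{enumerate}
\item Since $\Sigma_u^u\subseteq \mathrm{Iso}(\Sigma)^\circ$ and $\gamma_u\in D_0$, the set $D_0\cap \mathrm{Iso}(\Sigma)^\circ$ is an open neighbourhood of $\gamma_u$. Shrinking $K$ so that $K\subseteq r(D_0\cap\mathrm{Iso}(\Sigma)^\circ)$ gives $KD_0K\subseteq \mathrm{Iso}(\Sigma)^\circ$. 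Here we use that $D_0$ is a bisection: if $r(\sigma)\in K$ with $\sigma\in D_0$, then $\sigma$ is the unique element of $D_0$ with that range, so it lies in the isotropy interior.
\item For $D\neq D_0$ with $u\in r(D)\cap s(D)$, the element $\sigma_D\in D$ with $r(\sigma_D)=u$ satisfies $s(\sigma_D)\neq u$. Otherwise $\sigma_D\in\Sigma_u^u$, and then $q(\sigma_D)$ and $q(\gamma_u)$ both lie in $G_u^u$. By (DT1), $\sigma_D^{-1}\gamma_u\in i(G^{(0)}\times R^\times)$, so $\sigma_D\in D\cap R^\times D_0$. This contradicts \cref{lemma1}, or equivalently the disjointness of the images $q(D)$. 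Using Hausdorffness of $\Sigma^{(0)}$ and continuity of $s\circ(r|_D)^{-1}$, we shrink $K$ around $u$ so that $KDK=\varnothing$.
\item For $D$ with $u\notin r(D)$ or $u\notin s(D)$, we shrink $K$ to avoid the compact set $r(D)$, respectively $s(D)$; these sets are closed because $\Sigma$ is Hausdorff. This again gives $KDK=\varnothing$.
\end{enumerate}
We take $K$ to be a compact open neighbourhood of $u$ satisfying all finitely many of these conditions; such a $K$ exists because $\Sigma^{(0)}$ has a basis of compact open sets.

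With this $K$ we get $\tilde{1}_Kf\tilde{1}_K=r_{D_0}\tilde{1}_{KD_0K}$. This element is nonzero because $\gamma_u\in KD_0K$, and it is supported in $R^\times KD_0K\subseteq \mathrm{Iso}(\Sigma)^\circ$. Its support is therefore contained in $q^{-1}(\mathrm{Iso}(G)^\circ)$ by \cref{cor:intisotwist}. Restricting to $\mathrm{Iso}(\Sigma)^\circ$ gives a locally constant, contravariant function with compact image support, i.e.\ an element of $A_R(\mathrm{Iso}(G)^\circ;\mathrm{Iso}(\Sigma)^\circ)$ whose image under $\iota$ is our element.

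We expect the main obstacle to be the case analysis in step (2): it requires verifying carefully, using (DT1), (DT3) and \cref{lemma1}, that no other bisection of $\mathcal{F}$ meets $\Sigma_u^u$. A smaller point needing care is the normal-form decomposition of $f$ with disjoint $G$-images.
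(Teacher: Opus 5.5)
Your step (2) rests on a false deduction, and that step carries the whole argument. From $q(\sigma_D),q(\gamma_u)\in G_u^u$ you cannot conclude $\sigma_D^{-1}\gamma_u\in i(G^{(0)}\times R^\times)$. (DT1) only describes $q^{-1}(x)$ for units $x$, so you would need $q(\sigma_D)=q(\gamma_u)$, and that holds only when $G_u^u$ is trivial. The hypothesis $\Sigma_u^u\subseteq\text{Iso}(\Sigma)^\circ$ does not force trivial isotropy at $u$; allowing nontrivial isotropy is the whole point of the generalised theorem.

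Here is a concrete counterexample. Take $G=\mathbb{Z}$ with the trivial twist $\Sigma=\mathbb{Z}\times R^\times$, let $u=(0,1)$, and set $f=\tilde{1}_{\{(0,1)\}}+\tilde{1}_{\{(1,1)\}}$ with $\gamma_u=(0,1)$. All the hypotheses hold. Yet $D=\{(1,1)\}\neq D_0$ meets $\Sigma_u^u$, no $K\ni u$ makes $KDK$ empty, and $\tilde{1}_Kf\tilde{1}_K=f\neq r_{D_0}\tilde{1}_{KD_0K}$. In general several members of $\mathcal{F}$ can meet $\Sigma_u^u$, each in at most one point, lying in distinct $R^\times$-orbits. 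So your formula $\tilde{1}_Kf\tilde{1}_K=r_{D_0}\tilde{1}_{KD_0K}$ fails, and so does the claim you flagged, that no other bisection meets $\Sigma_u^u$.

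The repair is the paper's Case 1: apply your step (1) to every $D\in\mathcal{F}$ that contains a point of $\Sigma_u^u$, not only to $D_0$. Each such point lies in $\text{Iso}(\Sigma)^\circ$ by hypothesis, which is why the hypothesis covers all of $\Sigma_u^u$ and not just $\gamma_u$. You can therefore shrink $K$ into $r(D\cap\text{Iso}(\Sigma)^\circ)$ and obtain $KDK\subseteq\text{Iso}(\Sigma)^\circ$. Your step (2) then only has to handle those $D$ whose element with range $u$ has source $\neq u$, and step (3) handles the rest.

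With this $K$, $\tilde{1}_Kf\tilde{1}_K=\sum_D r_D\tilde{1}_{KDK}$ is supported in $\text{Iso}(\Sigma)^\circ$. It is nonzero not because only one term survives, but because $(\tilde{1}_Kh\tilde{1}_K)(\sigma)=h(\sigma)$ whenever $r(\sigma),s(\sigma)\in K$. Hence its value at $\gamma_u$ is $f(\gamma_u)\neq0$. The rest of your proposal (the normal form, the rescaling, steps (1) and (3), and the final support argument) is fine.
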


\begin{proof}
  Take $f = \sum_{U \in \mathcal{F}}r_U \tilde{1}_U \in A_R(G;\Sigma)$ where $\mathcal{F}$ is a finite collection of compact open bisections of $\Sigma$ with mutually disjoint images in $G$. Fix $u \in \Sigma^{(0)}$ such that $\Sigma^u_u \subseteq \text{Iso}(\Sigma)^\circ$. We choose a compact open neighborhood $V_D \subseteq \Sigma^{(0)}$ for each $D \in \mathcal{F}$ as follows:
   \begin{enumerate}
   \item  [Case 1:] If $r(\gamma)=u=s(\gamma)$ for some $\gamma \in D$, then $\gamma \in \text{Iso}(\Sigma)^\circ$. Choose $V_D \subseteq \Sigma^{(0)}$ such that $u \in V_D$. By local homeomorphisms of $r$ and $s$, $V_D$ is a subset of the open set $r(D \cap \text{Iso}(\Sigma)^\circ) = s(D \cap \text{Iso}(\Sigma)^\circ)$. Then, $V_DDV_D \subseteq D \cap \text{Iso}(\Sigma)^\circ$.
   
   \item [Case 2:] If $r(\gamma)=u \not= s(\gamma)$ for some $\gamma \in D$, use \cite[Corollary~2.3]{ACCCLMRSS} so that $\Sigma$ is Hausdorff and choose a compact open subset $D' \subseteq D$ of $\Sigma^{(0)}$containing $\gamma$ such that $r(D')$ and $s(D')$ are an open subset of $r(\gamma)$ and $s(\gamma)$, respectively.  Hence, $r(D') \cap s(D') = \varnothing$ and choose $V_D = s(D')$ to get $V_DDV_D = \varnothing$. The same argument holds if $s(\gamma)=u$ but $r(\gamma) \not=u$ by replacing $V_D=r(D')$.
   \item [Case 3:] If $r(\gamma) \not= u$ and  $s(\gamma) \neq u$ for all  $\gamma \in D$,  we again use \cite[Corollary~2.3]{ACCCLMRSS} to obtain our desired $V_D$ containing $u$ with $V_DDV_D = \varnothing$.
\end{enumerate}
Let $K=\bigcap_{D \in \mathcal{F}}V_D$ which is a compact open subset of $\Sigma^{(0)}$ containing $u$ by our construction of $V_D$. Hence, $KDK \subseteq V_DDV_D \subseteq D \cap \text{Iso}(\Sigma)^\circ \not= \varnothing$ and $\tilde{1}_Kf\tilde{1}_K$ is supported in $\text{Iso}(\Sigma)^\circ$, that is, $$\tilde{1}_Kf\tilde{1}_K \in \iota(A_R(\text{Iso}(G)^\circ;\text{Iso}(\Sigma)^\circ)$$
and $$(\tilde{1}_Kf \tilde{1}_K )(\gamma_u)= \tilde{1}_K(r(\gamma_u))f (\gamma_u)\tilde{1}_K(s(\gamma_u))=f(\gamma_u)\not=0.$$ by assumption.
\end{proof}
We now have the machinery we need to prove the generalised uniqueness theorem for twisted Steinberg algebras. We show that injectivity of homomorphism is completely determined by its behaviour on the isotropy interior, provided that units with interior isotropy are dense in the unit space. This density condition replaces the ``second-countability" and ``effective" groupoid requirements commonly imposed in existing literature, thereby yielding a sharper and more flexible uniqueness criterion. This is an algebraic analogue of \cite[Theorem~6.3]{Armstrong2022} for twisted groupoid $C^*$-algebras and a generalisation of \cite[Theorem~3.1]{GUTSA}.
\begin{thm}{(Generalised Uniqueness Theorem)}\label{GUT}
    Let $G$ be an ample Hausdorff groupoid, let $R$ be a unital commutative ring, and let $(\Sigma,i,q)$ be a discrete twist over $G$. Suppose that $$X_\Sigma=\{u \in \Sigma^{(0)}: \Sigma^u_u \subseteq \text{Iso}(\Sigma)^\circ\}$$ is dense in $\Sigma^{(0)}$. Let $Q$ be a ring and $\pi: A_R(G;\Sigma) \to Q$ be a ring homomorphism. Then $\pi$ is injective if and only if for every nonzero
    $$f \in A_R(\text{Iso}(G)^\circ; \text{Iso}(\Sigma)^\circ), \pi(\iota(f)) \not = 0$$ where $\iota$ is the natural inclusion map of $A_R(\text{Iso}(G)^\circ; \text{Iso}(\Sigma)^\circ)$ on $A_R(G;\Sigma).$ 
\end{thm}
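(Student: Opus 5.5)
The forward direction is immediate. If $\pi$ is injective and $f\neq 0$ in $A_R(\text{Iso}(G)^\circ;\text{Iso}(\Sigma)^\circ)$, then $\iota(f)\neq 0$ because $\iota$ is the inclusion of a subalgebra by \cref{cor:intisotwist}. Hence $\pi(\iota(f))\neq 0$.

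For the converse, I argue by contraposition. Suppose $\pi$ is not injective and pick $0\neq f\in\ker\pi$. The strategy is to use \cref{afterdense} to compress $f$ to a nonzero element supported on the isotropy interior that still lies in $\ker\pi$, contradicting the hypothesis.

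The key step is to find $u\in X_\Sigma$ and $\gamma_u\in\Sigma_u^u$ with $f(\gamma_u)\neq 0$. I plan to do this as follows.
\begin{enumerate}
\item Since $f$ is locally constant and nonzero, $\text{supp}(f)$ is a nonempty clopen subset of $\Sigma$. Write $f=\sum_{U\in\mathcal F} r_U\tilde 1_U$ with the $U$ compact open bisections with disjoint images in $G$, and fix a point $\sigma$ where $f(\sigma)\neq 0$.
\item Choose a compact open bisection $B\subseteq \text{supp}(f)$ containing $\sigma$; this is possible because $\Sigma$ is ample. Then $s(B)$ is a nonempty open subset of $\Sigma^{(0)}$.
\item By density of $X_\Sigma$ there is $v\in s(B)\cap X_\Sigma$. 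Let $\beta\in B$ with $s(\beta)=v$; then $f(\beta)\neq 0$.
\item The difficulty is that $\beta$ need not be isotropy. I fix this by translating: multiply on the left by $\tilde 1_{B^{-1}}$, an element of $A_R(G;\Sigma)$, so that $g:=\tilde 1_{B^{-1}}*f$ lies in $\ker\pi$ because $\ker\pi$ is an ideal.
\item Compute $g(v)$ with the section formula. The groupoid elements $\alpha\in G^{r(v)}$ with $\alpha$ in the image of $B^{-1}$ form the single element $q(\beta^{-1})$, so $g(v)=\tilde 1_{B^{-1}}(\beta^{-1})f(\beta)=f(\beta)\neq 0$. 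Here I use $R^\times$-contravariance to see that the result is independent of the representative.
\item Thus $g\in\ker\pi$ satisfies $g(v)\neq 0$ with $v\in\Sigma_v^v$ (a unit) and $v\in X_\Sigma$.
\end{enumerate}

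Now apply \cref{afterdense} to $g$, $u=v$ and $\gamma_u=v$. This gives a compact open $K\ni v$ with
\[0\neq \tilde 1_K g\tilde 1_K\in\iota(A_R(\text{Iso}(G)^\circ;\text{Iso}(\Sigma)^\circ)).\]
Because $\ker\pi$ is a two-sided ideal, $\pi(\tilde 1_K g\tilde 1_K)=\pi(\tilde 1_K)\pi(g)\pi(\tilde 1_K)=0$. Writing $\tilde 1_K g\tilde 1_K=\iota(h)$ with $h\neq 0$ contradicts the hypothesis, so $\pi$ must be injective.

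The main obstacle is steps 4–6: producing a kernel element nonzero at a unit lying in $X_\Sigma$. This needs care with the twisted convolution. I would check $g(v)$ by choosing the section $S$ so that $S(q(\beta^{-1}))=\beta^{-1}$. The remaining bookkeeping concerns the scalar $t^{-1}$ factors in $\tilde 1_{B^{-1}}$, which disappear on this choice.
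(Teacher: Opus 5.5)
Your proof is correct and follows essentially the same route as the paper: left-multiply a nonzero kernel element by $\tilde 1_{B^{-1}}$ for a suitable compact open bisection $B$ to get a kernel element that is nonzero at a unit of $X_\Sigma$ (found by density), then compress it with Lemma~\ref{afterdense}. The difference is only in ordering. You pick $B\subseteq \text{supp}(f)$ and $v\in s(B)\cap X_\Sigma$ first and evaluate the convolution at the single point $v$, so the disjoint-image decomposition in your step 1 and Lemma~\ref{lemma1} are never used. The paper instead translates by $\tilde 1_{D_0^{-1}}$ for some $D_0$ in that decomposition, uses Lemma~\ref{lemma1} to see the result equals $a_{D_0}$ on $s(D_0)$, and only then applies density.
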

\begin{proof}
    If $\pi$ is injective, then $\pi \circ \iota$ is injective. If $\pi$ is not injective then there exists a nonzero $g \in \text{ker}~\pi$. By \cite[Proposition~2.8]{ACCCLMRSS}, we can write $g=\sum_{\substack{D \in \mathcal{F}}}a_D \tilde{1}_D$ where $\mathcal{F}$ is a finite collection of compact open bisections in $\Sigma$ with mutually disjoint images in $G$. Pick $D_0 \in \mathcal{F}$ to define $f=\tilde{1}_{D_0^{-1}}  g$.  Then $f \in \text{ker}~\pi$. Using Lemma \ref{lemma1}, we see that for every $\alpha \in s(D_0), f(\alpha) =a_{D_0}$, that is, $f$ is nonzero and hence $\text{supp}(f) \cap \Sigma^{(0)} \not= \varnothing$.  Since $\Sigma$ is Hausdorff whenever $G$ is Hausdorff by \cite[Corollary~2.3]{ACCCLMRSS} and $f$ is locally constant and compactly supported, $\text{supp}(f) \cap \Sigma^{(0)}$ is an open subset of $\Sigma^{(0)}$. By density of $X_\Sigma$, we may choose $u \in \text{supp}(f) \cap \Sigma^{(0)}$ where $\Sigma_u^u \subseteq \text{Iso}(\Sigma)^\circ$. Taking $\gamma_u=u$ and applying Lemma \ref{afterdense}, there exists $K \subseteq \Sigma^{(0)}$ such that $$0 \not = \tilde{1}_K f \tilde{1}_K \in \iota(A_R(\text{Iso}(G)^\circ; \text{Iso}(\Sigma)^\circ).$$Thus, $\text{supp}(\tilde{1}_K f \tilde{1}_K) \subseteq \text{Iso}(\Sigma)^\circ$. Since $A_R(\text{Iso}(G)^\circ;\text{Iso}(\Sigma)^\circ) \subseteq A_R(G;\Sigma)$ and $\text{ker}~\pi$ is an ideal, $$0 \not= \tilde{1}_K f \tilde{1}_K \in \text{ker}~(\pi \circ \iota).$$
\end{proof}
The proof of Theorem \ref{GUT} relies on density of units whose isotropy lies inside the interior of the isotropy. The next lemma guarantees this density in the second-countable setting and generalizes \cite[Lemma~3.3(a)]{BNSW2016} to discrete twists. Since the argument depends on the Baire Category Theorem, we need the unit space to be a Baire space so we add the hypothesis that $G$ is second-countable however $\Sigma$ need not be second-countable.  The result basically follows from the untwisted version \cite[Lemma~3.3(a)]{BNSW2016}.

\begin{lemma}\label{dense}
   Let $G$ be a second-countable ample Hausdorff groupoid, let $R$ be a unital commutative ring, and let $(\Sigma,i,q)$ be a discrete twist over $G$. Then $$X_\Sigma =\{u \in \Sigma^{(0)}: \Sigma_u^u \subseteq \text{Iso}(\Sigma)^\circ\}$$ is dense in $\Sigma^{(0)}$.
\end{lemma}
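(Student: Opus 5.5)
The plan is to reduce to the untwisted statement \cite[Lemma~3.3(a)]{BNSW2016}, applied to $G$, and then transfer the conclusion to $\Sigma$ through $q$. Since $G$ is second-countable, ample and Hausdorff, that lemma says that
\[X_G=\{x\in G^{(0)}: G_x^x\subseteq \text{Iso}(G)^\circ\}\]
is dense in $G^{(0)}$. (If I wanted a self-contained argument, I would fix a countable basis of compact open bisections $\{B_n\}$ of $G$. For each $n$, the set $s(B_n\cap\text{Iso}(G))\setminus s(B_n\cap\text{Iso}(G)^\circ)$ is closed with empty interior. The complement of $X_G$ is the countable union of these sets, and $G^{(0)}$ is locally compact Hausdorff and hence Baire. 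In what follows I will simply cite the lemma.)

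Next I would show that $q|_{\Sigma^{(0)}}$ carries $X_\Sigma$ onto $X_G$. That is, for $u\in\Sigma^{(0)}$ with $x=q(u)$, I claim $u\in X_\Sigma$ if and only if $x\in X_G$.

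The key input is the identity $q^{-1}(\text{Iso}(G)^\circ)=\text{Iso}(\Sigma)^\circ$, established in the proof of \cref{cor:intisotwist}. I also need that $q$ maps $\Sigma_u^u$ onto $G_x^x$, with $\Sigma_u^u=q^{-1}(G_x^x)$. This holds because $q$ is a homomorphism that is bijective on unit spaces, so $\sigma\in\Sigma_u^u$ exactly when $q(\sigma)\in G_x^x$; surjectivity comes from (DT1).

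With these two facts the equivalence is immediate. If $G_x^x\subseteq\text{Iso}(G)^\circ$, then
\[\Sigma_u^u=q^{-1}(G_x^x)\subseteq q^{-1}(\text{Iso}(G)^\circ)=\text{Iso}(\Sigma)^\circ.\]
Conversely, if $\Sigma_u^u\subseteq\text{Iso}(\Sigma)^\circ$, then by surjectivity $G_x^x=q(\Sigma_u^u)\subseteq q(\text{Iso}(\Sigma)^\circ)=\text{Iso}(G)^\circ$. Only the first direction is actually needed for density.

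Finally, $q|_{\Sigma^{(0)}}:\Sigma^{(0)}\to G^{(0)}$ is a homeomorphism, by the definition of a discrete twist. Since $X_\Sigma=(q|_{\Sigma^{(0)}})^{-1}(X_G)$, it is the homeomorphic preimage of a dense set and is therefore dense in $\Sigma^{(0)}$. Note that this route only uses second-countability of $G$, which is consistent with the remark before the lemma that $\Sigma$ need not be second-countable.

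I expect the main obstacle to be checking carefully that $q^{-1}(\text{Iso}(G)^\circ)=\text{Iso}(\Sigma)^\circ$. The inclusion $\subseteq$ follows from continuity of $q$. The inclusion $\supseteq$ needs openness of $q$ \cite[Proposition~2.2]{ACCCLMRSS}, since $q(\text{Iso}(\Sigma)^\circ)$ is then an open subset of $\text{Iso}(G)$. Since this identity was already argued in \cref{cor:intisotwist}, I will invoke it there rather than reprove it.
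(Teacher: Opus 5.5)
Your proposal is correct and follows the paper's route. Like the paper, you use $q^{-1}(\text{Iso}(G)^\circ)=\text{Iso}(\Sigma)^\circ$ and $q(\Sigma_u^u)=G_{q(u)}^{q(u)}$ to get $u\in X_\Sigma \iff q(u)\in X_G$, and then transfer density of $X_G$ from \cite[Lemma~3.3(a)]{BNSW2016} through the homeomorphism $q|_{\Sigma^{(0)}}$; your formulation $X_\Sigma=(q|_{\Sigma^{(0)}})^{-1}(X_G)$ is slightly cleaner than the paper's claim that $q^{-1}(X_G)$ itself is dense in $\Sigma^{(0)}$.
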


\begin{proof}
First, using that $q$ is a continuous open surjection that preserves source and range, we have  
$q(\text{Iso}(\Sigma)^\circ) = \text{Iso}(G)^\circ$.  We also have that for any $u \in \Sigma^{(0)}$, $q(\Sigma_u^u) = G^u_u$ and hence
    $q(u) \in X_G$ if and only if $u \in X_\Sigma$. 
    Thus, $X_\Sigma =q^{-1}(X_G) \cap \Sigma^{(0)}$. By \cite[Lemma~3.3(a)]{BNSW2016}, $q^{-1}(X_G)$ is dense in $\Sigma^{(0)}$ and so is $X_\Sigma$. 
\end{proof}

The following is immediate from \cref{GradedUT} and \cref{dense}.
\begin{cor}
Let $G$ be a second-countable ample Hausdorff groupoid, let $R$ be a unital commutative ring, and let $(\Sigma,i,q)$ be a discrete twist over $G$. Let $Q$ be a ring and $\pi: A_R(G;\Sigma) \to Q$ be a ring homomorphism. Then $\pi$ is injective if and only if for every nonzero $$f \in A_R(\text{Iso}(G)^\circ; \text{Iso}(\Sigma)^\circ), \pi(\iota(f)) \not = 0$$ where $\iota$ is the natural inclusion map of $A_R(\text{Iso}(G)^\circ; \text{Iso}(\Sigma)^\circ)$ on $A_R(G;\Sigma).$     
\end{cor}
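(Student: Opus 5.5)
The corollary follows by feeding \cref{dense} into the Generalised Uniqueness Theorem, \cref{GUT}. (The sentence before the statement cites \cref{GradedUT}, but the statement is the ungraded one, so \cref{GUT} is the relevant result.) \cref{GUT} has one extra hypothesis beyond the standing assumptions: the set
$$X_\Sigma=\{u \in \Sigma^{(0)}: \Sigma^u_u \subseteq \text{Iso}(\Sigma)^\circ\}$$
must be dense in $\Sigma^{(0)}$. Under the present assumptions, \cref{dense} supplies exactly this.

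First, note that $G$ is a second-countable ample Hausdorff groupoid, $R$ is a unital commutative ring, and $(\Sigma,i,q)$ is a discrete twist over $G$. These are precisely the hypotheses of \cref{dense}, so $X_\Sigma$ is dense in $\Sigma^{(0)}$.

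Second, second-countability is used only to obtain this density. The remaining hypotheses of \cref{GUT} (ample Hausdorff $G$, unital commutative $R$, discrete twist $\Sigma$) hold verbatim. Hence \cref{GUT} applies to any ring homomorphism $\pi\colon A_R(G;\Sigma)\to Q$.

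Third, \cref{GUT} then gives the stated equivalence: $\pi$ is injective if and only if $\pi(\iota(f))\neq 0$ for every nonzero $f\in A_R(\text{Iso}(G)^\circ;\text{Iso}(\Sigma)^\circ)$. Here $\iota$ is well defined by \cref{cor:intisotwist}.

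There is no substantive obstacle; the argument is a direct composition of two earlier results. The only point to check is that \cref{dense} needs second-countability of $G$ and not of $\Sigma$. That matches the corollary's hypotheses, since the Baire category argument takes place in $G^{(0)}\cong\Sigma^{(0)}$.
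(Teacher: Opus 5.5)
Your proposal is correct and matches the paper's argument: \cref{dense} supplies the density of $X_\Sigma$ in $\Sigma^{(0)}$, and then \cref{GUT} gives the stated equivalence. You are also right that the paper's citation of \cref{GradedUT} in the sentence before the statement is a slip for \cref{GUT}.
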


In the untwisted setting, the Cuntz-Krieger uniqueness theorem for $A_R(G)$ (see \cite[Corollary~3.4]{GUTSA}) is a direct consequence of the generalised uniqueness theorem for $A_R(G)$ (see \cite[Theorem~3.1]{GUTSA}). In Corollary \ref{CK1}, we show that the Cuntz-Krieger uniqueness theorem for $A_R(G;\Sigma)$ likewise follows from Theorem \ref{GUT}.
\begin{remark}
    For an effective ample Hausdorff groupoid $G$, $\text{Iso}(G)^\circ = G^{(0)}$ which implies $\text{Iso}(\Sigma)^\circ=\Sigma^{(0)}$ since $q$ restricts homeomorphism of unit spaces. Hence, to show injectivity of a homomorphism in the twisted Steinberg algebra setting for effective groupoids, we only need to check if it is injective in $A_R(G^{(0)};\Sigma^{(0)})$ and apply Theorem \ref{GUT}. In the case of a discrete twist, it is sufficient to check injectivity in $A_R(\Sigma^{(0)})$ since the restriction of $\Sigma$ to its unit space is trivial and will give us the $R$-algebra isomorphism $A_R(G^{(0)};\Sigma^{(0)}) \cong A_R(\Sigma^{(0)})$.
\end{remark}
\begin{cor}\label{CK1}{(Cuntz-Krieger uniqueness theorem)}
    Let $G$ be an effective ample Hausdorff groupoid, let $R$ be a unital commutative ring, and let $(\Sigma,i,q)$ be a discrete twist over $G$. Suppose that $Q$ is a ring and $\pi: A_{R}(G, \Sigma) \to Q$ is a ring homomorphism. Then $\pi$ is injective if and only if $\pi(r\Tilde{1}_W) \not= 0$ for every nonempty compact open subset $W$ of $\Sigma^{(0)}$ and $r \in R \setminus \{0\}$.
\end{cor}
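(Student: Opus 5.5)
We prove the forward direction directly and obtain the converse from Theorem~\ref{GUT}, after checking that the density hypothesis holds automatically for effective $G$.

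\emph{Forward direction.} Suppose $\pi$ is injective, $W\subseteq\Sigma^{(0)}$ is a nonempty compact open set and $r\neq 0$. Then $r\tilde{1}_W$ takes the value $r$ at each $u\in W$, so it is a nonzero element of $A_R(G;\Sigma)$. Injectivity of $\pi$ gives $\pi(r\tilde{1}_W)\neq 0$.

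\emph{Density.} Since $G$ is effective, the preceding remark gives $\text{Iso}(G)^\circ=G^{(0)}$ and $\text{Iso}(\Sigma)^\circ=\Sigma^{(0)}$. For $u\in\Sigma^{(0)}$, we have $u\in X_\Sigma$ exactly when $\Sigma^u_u=\{u\}$. But $\Sigma^u_u\supseteq i(\{q(u)\}\times R^\times)$, which is not $\{u\}$ unless $R^\times$ is trivial. So $X_\Sigma$, taken literally, is not dense. We therefore apply Lemma~\ref{afterdense} and the proof of Theorem~\ref{GUT} modulo the $R^\times$-action. The point is that contravariant functions are determined by their values on $q$-fibres up to scalars. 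Hence the relevant condition is that $q(u)$ lies in $X_G=\{x: G^x_x\subseteq \text{Iso}(G)^\circ\}$, i.e.\ $G^x_x=\{x\}$ for effective $G$. I expect this set to be dense without any second-countability assumption. Indeed, if $\text{supp}(f)\cap\Sigma^{(0)}$ is open and nonempty, I do not need density. I only need a point $u$ in that set to which the case analysis of Lemma~\ref{afterdense} applies. Reading the proof of Lemma~\ref{afterdense}, Case~1 only needs each $\gamma\in D$ with $r(\gamma)=s(\gamma)=u$ to lie in $\text{Iso}(\Sigma)^\circ$. If this fails for some $D$, we can shrink. Since $D$ is a bisection and the isotropy points of $q(D)$ near a unit have no interior in the effective case, we choose $u$ to avoid the finitely many closed nowhere-dense sets $s(q(D)\cap\overline{\text{Iso}(G)\setminus G^{(0)}})$. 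This is the main obstacle. My first attempt will be to use the locally compact Hausdorff (hence Baire) unit space and finiteness of $\mathcal{F}$: each of finitely many closed sets with empty interior cannot cover a nonempty open set.

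\emph{Converse.} Assume $\pi(r\tilde{1}_W)\neq 0$ for all such $W$ and $r$, and suppose $\ker\pi$ contains a nonzero $g=\sum_{D\in\mathcal{F}}a_D\tilde{1}_D$, with the $D$ having disjoint images in $G$. As in the proof of Theorem~\ref{GUT}, set $f=\tilde{1}_{D_0^{-1}}g\in\ker\pi$; it is nonzero on $s(D_0)$. Pick $u$ as above and apply Lemma~\ref{afterdense} to get a compact open $K\ni u$ with
\[
0\neq\tilde{1}_Kf\tilde{1}_K\in\iota\bigl(A_R(\Sigma^{(0)})\bigr)\cap\ker\pi .
\]
Call this element $h$. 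It is a locally constant, compactly supported function on $\Sigma^{(0)}$. Since $h(u)=f(u)=:r\neq 0$, let $W$ be the compact open set $h^{-1}(r)$, which is nonempty because it contains $u$. Then
\[
\tilde{1}_W h=r\tilde{1}_W\in\ker\pi ,
\]
contradicting the hypothesis. Hence $\pi$ is injective.
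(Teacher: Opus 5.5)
Your argument is correct. It departs from the paper's proof at exactly the two points where that proof is thin.

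\textbf{Density.} The paper identifies the isotropy-interior subalgebra with $A_R(\Sigma^{(0)})$, asserts injectivity there, and then cites Theorem~\ref{GUT}. That theorem needs $X_\Sigma$ to be dense, which the paper never checks. Density can fail for effective groupoids that are not second countable. For instance, let the discrete group of affine maps $x\mapsto ax+b$ ($a\in\mathbb{Z}_p^\times$, $b\in\mathbb{Z}_p$) act on $\mathbb{Z}_p$. Each nontrivial map fixes at most one point, so the transformation groupoid is effective, ample and Hausdorff. But every point is fixed by some nontrivial map, so every unit has nontrivial isotropy.

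You instead re-run the proofs of Theorem~\ref{GUT} and Lemma~\ref{afterdense} for the specific element $f=\tilde 1_{D_0^{-1}}g$. You choose one unit $u\in s(D_0)$ with $q(u)$ outside the finitely many compact sets $s\bigl(q(D_0^{-1}D)\cap\text{Iso}(G)\bigr)$, $D\neq D_0$. These sets have empty interior: $s$ maps the bisection $q(D_0^{-1}D)$ homeomorphically onto an open set, and an open piece of $q(D_0^{-1}D)\cap\text{Iso}(G)$ would lie in $\text{Iso}(G)^\circ=G^{(0)}$, which $q(D_0^{-1}D)$ misses by Lemma~\ref{lemma1}. A finite union of closed sets with empty interior has empty interior in any space. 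So you need neither Baire, nor density, nor second countability, and this is what makes the corollary hold in the generality stated.

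\textbf{Final step.} The paper passes from $\pi(r_i\tilde 1_{W_i})\neq 0$ for each $i$ to $\pi(h)\neq 0$, which does not follow as written. Your identity $\tilde 1_W h=r\tilde 1_W$, with $W=h^{-1}(r)\cap\Sigma^{(0)}$, places a forbidden element directly in $\ker\pi$ and is the correct way to finish.

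\textbf{What each route buys.} The paper's route gives brevity and a formal reduction to Theorem~\ref{GUT}. That reduction is valid whenever $X_\Sigma$ is dense, e.g.\ for second-countable $G$ by Lemma~\ref{dense}. Your route proves the statement as actually claimed.

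Two side remarks in your density paragraph are wrong, although nothing in your proof depends on them.
\begin{enumerate}
\item The claim $\text{Iso}(\Sigma)^\circ=\Sigma^{(0)}$, inherited from the paper's remark, fails whenever $R^\times\neq\{1\}$. The set $q^{-1}(G^{(0)})=i(G^{(0)}\times R^\times)$ is open and consists of isotropy, and since $q$ is open one gets $\text{Iso}(\Sigma)^\circ=q^{-1}(G^{(0)})$. Hence $X_\Sigma$, taken literally, equals $q^{-1}(X_G)\cap\Sigma^{(0)}$. That is exactly the condition you reach, so your ``modulo $R^\times$'' detour is unnecessary.
\item Your expectation that $X_G$ is dense without second countability is false, as the affine example shows. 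Your argument does not rely on it, since you replace it by the finitely-many-bad-sets argument.
\end{enumerate}
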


\begin{proof}
Since $\Sigma$ as a discrete twist, $q$ is a homeomorphism that restricts to homeomorphism of unit spaces, that is, $q^{-1}(G^{(0)})=\Sigma^{(0)}$. Also, $q$ preserves isotropy and sends open sets to open sets, that is, $$\text{Iso}(\Sigma)^\circ= q^{-1}(\text{Iso}(G)^\circ)=q^{-1}(G^{(0)})=\Sigma^{(0)}.$$ As the twist over the unit space is trivial, $$A_R(\text{Iso}(G)^\circ;\text{Iso}(\Sigma)^\circ)=A_R(G^{(0)};\Sigma^{(0)}) \cong A_R(\Sigma^{(0)}).$$  Let $h \in A_R(\Sigma^{(0)}) \setminus \{0\}$. Then $h = \sum_{i=1}^n r_i \tilde{1}_{W_i}$ where $r_i \in R \setminus \{0\}$ and $W_i$ are non-empty compact open bisections in $\Sigma^{(0)}$. By assumption, $\pi(r_i \tilde{1}_{W_i}) \not =0$ for each $i$, that is, $\pi(h) \not=0$ and $\pi|_{A_R(\Sigma^{(0)})}$ is injective. By Theorem \ref{GUT}, $\pi$ is injective.   
\end{proof}

We next recover \cite[Theorem~6.1]{ACCLMR} as a corollary of the generalized uniqueness theorem. Using the key observation that any locally constant $2$-cocycle $\sigma$ on a Hausdorff \'etale  groupoid $G$ gives rise to a discrete twist over $G$ (see \cite[Example~4.5]{ACCLMR}), we reduce the problem to an application of Corollary \ref{CK1} and obtain an injectivity criterion in terms of compact open subsets of the unit space.
\begin{cor}\label{sigma}
    Let $G$ be an  effective ample Hausdorff groupoid, let $R$ be a field and $\sigma:G^{(2)} \to R^{\times}$ be a continuous 2-cocycle. Suppose that $Q$ is a ring and that $\pi:A_R(G;\sigma) \to Q$ is a ring homomorphism. Then $\pi$ is injective if and only if $\pi(1_W)\not=0$ for every nonempty compact open subset $W$ of $G^{(0)}$.
\end{cor}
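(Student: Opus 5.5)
The plan is to realise $A_R(G;\sigma)$ as a twisted Steinberg algebra $A_R(G;\Sigma_\sigma)$ for a suitable discrete twist, and then apply Corollary \ref{CK1}.

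\textbf{Step 1: the twist.} Following \cite[Example~4.5]{ACCLMR}, set $\Sigma_\sigma = G \times R^\times$. The multiplication is
\[
(\alpha,s)(\beta,t)=(\alpha\beta,\sigma(\alpha,\beta)st),
\]
with $i(x,t)=(x,t)$ and $q(\alpha,t)=\alpha$. We may normalise $\sigma$ so that $\sigma(r(\alpha),\alpha)=\sigma(\alpha,s(\alpha))=1$. Since $\sigma$ is continuous into the discrete group $R^\times$, it is locally constant. Then (DT1)--(DT3) hold with global continuous section $P(\alpha)=(\alpha,1)$, and $\Sigma_\sigma$ is Hausdorff and ample.

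\textbf{Step 2: the isomorphism.} I will cite, or sketch, the isomorphism $\Phi:A_R(G;\sigma)\to A_R(G;\Sigma_\sigma)$ given by
\[
\Phi(f)(\alpha,t)=t^{-1}f(\alpha).
\]
This is $R^\times$-contravariant because $t\cdot(\alpha,s)=(\alpha,ts)$, using the normalisation. It is bijective with inverse $g\mapsto g(\cdot,1)$. For multiplicativity, compute with the section $S(\alpha)=(\alpha,1)$. We have
\[
S(\alpha)^{-1}=(\alpha^{-1},\sigma(\alpha,\alpha^{-1})^{-1}),
\]
so
\[
S(\alpha)^{-1}(\gamma,1)=(\alpha^{-1}\gamma,\ \sigma(\alpha^{-1},\gamma)\sigma(\alpha,\alpha^{-1})^{-1}).
\]
The cocycle identity reduces this scalar to $\sigma(\alpha,\alpha^{-1}\gamma)^{-1}$. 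Hence $\Phi(f)*\Phi(g)$ evaluated at $(\gamma,1)$ equals
\[
\sum_{\alpha\beta=\gamma}\sigma(\alpha,\beta)f(\alpha)g(\beta)=(f*_\sigma g)(\gamma).
\]
This cocycle bookkeeping is the main obstacle. If the convention mismatches, I will instead use $\Phi(f)(\alpha,t)=t^{-1}f(\alpha)$ composed with inversion of $\sigma$, or simply quote \cite[Example~4.5]{ACCLMR}, where the identification is made. For $W\subseteq G^{(0)}$ compact open, $\Phi(1_W)=\tilde{1}_{W\times\{1\}}$, and $W\times\{1\}$ ranges over all compact open subsets of $\Sigma_\sigma^{(0)}$.

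\textbf{Step 3: apply Corollary \ref{CK1}.} The forward direction is trivial. Conversely, suppose $\pi(1_W)\neq0$ for every nonempty compact open $W$, and consider $\pi\circ\Phi^{-1}$. Because $R$ is a field, every $r\in R\setminus\{0\}$ is invertible. Hence $\pi(r1_W)=0$ would give $\pi(1_W)=\pi(r^{-1}1_W\cdot r1_W)=0$, using that $1_W*_\sigma 1_W=1_W$ by the normalisation. This is a contradiction, so $\pi\circ\Phi^{-1}(r\tilde{1}_{W\times\{1\}})\neq0$ for all such $r$ and $W$. Since $G$ is effective, Corollary \ref{CK1} gives that $\pi\circ\Phi^{-1}$ is injective, and therefore $\pi$ is injective.
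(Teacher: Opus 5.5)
Your proof is correct and follows the paper's route. Like the paper, you realise $A_R(G;\sigma)$ as $A_R(G;\Sigma_\sigma)$ via \cite[Example~4.5]{ACCLMR} and then apply Corollary~\ref{CK1}. You also make explicit two points the paper leaves implicit: the cocycle computation showing $\Phi$ is multiplicative, and the use of invertibility of nonzero scalars (together with $1_W*_\sigma 1_W=1_W$) to upgrade $\pi(1_W)\neq0$ to the condition $\pi(r1_W)\neq0$ that Corollary~\ref{CK1} actually requires.
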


\section{Generalised Graded Uniqueness Theorem}
\label{sec4}
    In this section, we prove a generalised graded uniqueness theorem for twisted Steinberg algebras where $(\Sigma, G, \Gamma)$ is a graded discrete $R$-twist as in \cite{naingue}, and $\Gamma$ is a discrete group with identity $\epsilon$. This result extends \cite[Theorem~3.4]{CE2015} by eliminating the ``effective" hypothesis on the homogeneous component with degree $\epsilon$. As a consequence, we recover \cite[Theorem~7.2]{ACCCLMRSS} and extend it to the discrete twist setting as well.

    For the proof of the generalised graded uniqueness theorem, it is sufficient to check injectivity on $A_R(G_\epsilon;\Sigma_\epsilon)$ and apply Theorem \ref{GUT}. To do that, we need the following result.
    \begin{lemma}\label{4.1}
        Let $G$ be an ample Hausdorff groupoid, let $R$ be a unital commutative ring, let $\Gamma$ be a discrete group with identity $\epsilon$. Suppose $(\Sigma, G, \Gamma)$ is a graded discrete $R$-twist over $G$ and let $c_G: G \to \Gamma$ and $c_\Sigma: \Sigma \to \Gamma$ be $\Gamma$-gradings on $G$ and $\Sigma$, respectively. Then, $$G_\epsilon^{(0)} \times R^\times \overset{i}{\hookrightarrow} \Sigma_\epsilon \overset{q}{\hookrightarrow} G_\epsilon,$$ a discrete $R$-twist over $G_\epsilon$ and $A_R(G_\epsilon;\Sigma_\epsilon)$ is a subalgebra of $A_R(G;\Sigma)$.
    \end{lemma}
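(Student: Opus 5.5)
The plan is to reduce everything to \cref{lem:BCF}: that lemma yields a discrete twist as soon as we restrict to an open subgroupoid $H$ of $G$, and it also yields the subalgebra statement. So I would take $H = G_\epsilon$ and check that $\Sigma|_{G_\epsilon} = q^{-1}(G_\epsilon) = \Sigma_\epsilon$.

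First I check that $G_\epsilon$ is an open subgroupoid of $G$. Since $c_G$ is a continuous homomorphism into the discrete group $\Gamma$, the set $G_\epsilon = c_G^{-1}(\epsilon)$ is clopen. It is closed under composition and inversion because $c_G(\alpha\beta)=c_G(\alpha)c_G(\beta)$ and $c_G(\alpha^{-1})=c_G(\alpha)^{-1}$. It contains $G^{(0)}$ because a homomorphism sends units to the identity, so $G_\epsilon^{(0)} = G^{(0)}$.

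Next I identify the restricted twist. The diagram commutes, so $c_\Sigma = c_G\circ q$. Hence $\sigma \in \Sigma_\epsilon$ if and only if $c_G(q(\sigma))=\epsilon$, which holds if and only if $\sigma \in q^{-1}(G_\epsilon)$. Therefore $\Sigma_\epsilon = q^{-1}(G_\epsilon) = \Sigma|_{G_\epsilon}$. I would also note that $i(G^{(0)}\times R^\times) \subseteq \Sigma_\epsilon$, since $q\circ i$ lands in $G^{(0)} \subseteq G_\epsilon$. This ensures the restricted map $i$ has the stated codomain.

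Applying \cref{lem:BCF} with $H=G_\epsilon$ then gives the discrete twist $G_\epsilon^{(0)}\times R^\times \to \Sigma_\epsilon \to G_\epsilon$, and shows that $A_R(G_\epsilon;\Sigma_\epsilon)$ is a subalgebra of $A_R(G;\Sigma)$. The inclusion is extension by zero, which is well defined because $\Sigma_\epsilon$ is clopen in $\Sigma$. I expect no real obstacle. The only point needing care is the identification $\Sigma_\epsilon = q^{-1}(G_\epsilon)$, which rests entirely on commutativity of the grading diagram. Optionally, one can remark that under this inclusion the image is exactly $A_R(G;\Sigma)_\epsilon$, because functions with $\mathrm{supp}_G(f)\subseteq G_\epsilon$ vanish off $q^{-1}(G_\epsilon)=\Sigma_\epsilon$.
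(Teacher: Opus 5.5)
Your proposal is correct and follows the same route as the paper: use commutativity of the grading diagram to get $\Sigma_\epsilon = q^{-1}(G_\epsilon)$, note that $G_\epsilon$ is a clopen subgroupoid, and apply \cref{lem:BCF} with $H = G_\epsilon$. Your version is slightly cleaner: you write the composition correctly as $c_\Sigma = c_G\circ q$, and you use only continuity of the gradings for clopenness, where the paper inaccurately calls $q$ a homeomorphism.
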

\begin{proof}
    Since $(\Sigma,G,\Gamma)$ is a graded discrete $R$-twist, $c_\Sigma=q \circ c_G$. Hence, $$q^{-1}(G_\epsilon)=q^{-1}(c^{-1}_G(\epsilon)) = c^{-1}_\Sigma(\epsilon)=\Sigma_\epsilon.$$
Since $G_\epsilon$ is a clopen subgroupoid of $G$ and $q$ is a homeomorphism, $\Sigma_\epsilon$ is clopen in $\Sigma$. Thus, our desired result follows from Lemma \ref{lem:BCF}.
\end{proof}

    One key technical tool is the observation that any nonzero homogeneous element may be reduced via convolution with a suitably chosen compactly supported functions, to an element supported in the homogeneous component of degree $\epsilon$. The next lemma establishes this reduction which we use in the proof of our main theorem.
\begin{lemma}\label{gradedlemma}
        Let $G$ be an ample Hausdorff groupoid, let $R$ be a unital commutative ring, let $\Gamma$ be a discrete group with identity $\epsilon$. Suppose that $(\Sigma,G, \Gamma)$ is a graded discrete $R$-twist where $c_G:G \to \Gamma$ and $c_\Sigma: \Sigma \to \Gamma$ are $\Gamma$-gradings on $G$ and $\Sigma$, respectively. Then for each $g_\gamma \in A_R(G;\Sigma)_\gamma \setminus \{0\}$, there exists a compact open bisection $B$ such that $$f= \tilde{1}_{B^{-1}} *g_\gamma \in A_R(G_\epsilon;\Sigma_\epsilon)$$ and $f$ is supported in $\Sigma^{(0)}$.
    \end{lemma}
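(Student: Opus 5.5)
The plan is to mimic the first step of the proof of \cref{GUT}, now keeping track of degrees. First I will write the nonzero homogeneous element as $g_\gamma=\sum_{D\in\mathcal F}a_D\tilde 1_D$ using \cite[Proposition~2.8]{ACCCLMRSS}. Here $\mathcal F$ is a finite collection of compact open bisections of $\Sigma$ with mutually disjoint images in $G$, and every $a_D\neq 0$. Since $\operatorname{supp}_G(g_\gamma)\subseteq G_\gamma$ and $G_\gamma$ is clopen, I will intersect each $D$ with the clopen set $\Sigma_\gamma=q^{-1}(G_\gamma)$. This lets me assume $D\subseteq\Sigma_\gamma$ for every $D\in\mathcal F$, without changing $g_\gamma$ and while keeping the images disjoint. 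Then I fix $D_0\in\mathcal F$ and take $B=D_0$; if needed, $B$ can be any nonempty compact open sub-bisection of $D_0$.

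The degree claim comes next. Because $c_\Sigma$ is a groupoid homomorphism, $c_\Sigma(\beta^{-1}\delta)=c_\Sigma(\beta)^{-1}c_\Sigma(\delta)=\gamma^{-1}\gamma=\epsilon$ whenever $\beta\in B\subseteq\Sigma_\gamma$ and $\delta\in D\subseteq\Sigma_\gamma$. Each product $\tilde 1_{B^{-1}}*\tilde 1_D$ equals $\tilde 1_{B^{-1}D}$, so it is supported in the $R^\times$-saturation of $B^{-1}D\subseteq\Sigma_\epsilon$. Equivalently, $\tilde 1_{B^{-1}}\in A_R(G;\Sigma)_{\gamma^{-1}}$ and the grading of \cite[Proposition~4.2]{naingue} gives $f\in A_R(G;\Sigma)_{\epsilon}$. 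By \cref{4.1}, this degree-$\epsilon$ component is exactly $A_R(G_\epsilon;\Sigma_\epsilon)$ sitting inside $A_R(G;\Sigma)$. So $f\in A_R(G_\epsilon;\Sigma_\epsilon)$.

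The remaining claim, that $f$ is supported in $\Sigma^{(0)}$, I will establish in the sense used in the proof of \cref{GUT}: $f$ is nonzero on units, with $f|_{s(B)}\equiv a_{D_0}$. Expanding, $f=\sum_D a_D\tilde 1_{B^{-1}D}$. By \cref{lemma1} (applied with $B\subseteq D_0$), the sets $B^{-1}D$ with $D\neq D_0$ miss $i(G^{(0)}\times R^\times)$, so these terms vanish on $\Sigma^{(0)}$. The remaining term has $B^{-1}D_0\cap q^{-1}(G^{(0)})=B^{-1}B=s(B)\subseteq\Sigma^{(0)}$, so the $D_0$-term restricted to units is $a_{D_0}\tilde 1_{s(B)}$. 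Hence $f(u)=a_{D_0}\neq0$ for all $u\in s(B)$, which gives $0\neq f$ and $\operatorname{supp}(f)\cap\Sigma^{(0)}\supseteq s(B)\neq\varnothing$.

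The main obstacle is this last step. The literal inclusion $\operatorname{supp}(f)\subseteq\Sigma^{(0)}$ cannot be forced by one-sided multiplication alone: if some $D_1\neq D_0$ has $r(D_1)\cap r(B)\neq\varnothing$, the term $a_{D_1}\tilde 1_{B^{-1}D_1}$ is nonzero off the units. I would first try to shrink $B$ inside $D_0$ so that $r(B)$ avoids $r(D)$ for all $D\neq D_0$. This works exactly when $r(D_0)\not\subseteq\bigcup_{D\neq D_0}r(D)$, and then $f=a_{D_0}\tilde 1_{s(B)}$ is genuinely supported in $\Sigma^{(0)}$. Otherwise I will record the statement in the form actually needed downstream: $f$ is a nonzero element of $A_R(G_\epsilon;\Sigma_\epsilon)$ taking the constant value $a_{D_0}$ on the unit set $s(B)$. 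This is precisely the input required to pass to \cref{GUT} via \cref{afterdense}.
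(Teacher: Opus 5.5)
Your argument follows the paper's proof. Both decompose $g_\gamma$ over homogeneous compact open bisections with disjoint images in $G$, take $B$ inside one of them, get $f\in A_R(G;\Sigma)_\epsilon=A_R(G_\epsilon;\Sigma_\epsilon)$ from $B^{-1}D\subseteq\Sigma_{\gamma^{-1}\gamma}=\Sigma_\epsilon$, and use \cref{lemma1} to see that $f$ equals the nonzero constant $a_{D_0}$ on $s(B)$. Your choice of $B$ as a sub-bisection of some $D_0\in\mathcal F$ is cleaner than the paper's choice of an arbitrary $B\subseteq\Sigma_\gamma$ on which $g_\gamma$ is constant. The paper's expansion $f=r_B\tilde 1_{B^{-1}B}+\sum_{U\neq B}r_U\tilde 1_{B^{-1}U}$ tacitly assumes $B\in\mathcal F$.

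Your caveat about the last claim is justified. The paper's proof asserts $\operatorname{supp}(f)\subseteq\Sigma^{(0)}\cap B^{-1}U$, which does not follow from that expansion. The expansion only gives $\operatorname{supp}(f)\subseteq R^\times\cdot\bigl(s(B)\cup\bigcup_{D\neq D_0}B^{-1}D\bigr)\subseteq\Sigma_\epsilon$ together with $\operatorname{supp}(f)\cap\Sigma^{(0)}\neq\varnothing$.

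A nonzero $f$ with $\operatorname{supp}_G(f)\subseteq G^{(0)}$ cannot be produced in general. Take $G=\mathbb{Z}^2$ with the trivial twist, graded by the first coordinate, and $g=\tilde 1_{\{((1,0),1)\}}+\tilde 1_{\{((1,1),1)\}}$. Every nonempty compact open bisection of $\Sigma$ is a singleton, so every $\tilde 1_{B^{-1}}*g$ is either $0$ or has $G$-support consisting of two distinct group elements.

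So what you actually prove is the correct content of the lemma: $f$ is nonzero, has degree $\epsilon$, and is constant $a_{D_0}\neq0$ on $s(B)$. This is also all that the proof of \cref{GGUT} uses.
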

    \begin{proof}
       For $g_\gamma \in A_R(G;\Sigma)_\gamma$ with $\gamma \in \Gamma$, \cite[Lemma~4.3(a)]{naingue} implies $g_\gamma = \sum_{\substack{U \in \mathcal{F}}}r_U\tilde{1}_U$ where
       $\mathcal{F}$ is a finite set of homogeneous compact open bisections of $\Sigma$ with mutually disjoint images in $G$ and $r_U \in R$. For each $\gamma \in \Gamma$, $g_\gamma$ is locally constant and $\Sigma_\gamma$ is open since $c_\Sigma$ is continuous and $\Gamma$ is discrete. Hence, there exists a compact open bisection $B \subseteq\Sigma_\gamma$ such that $g_\gamma|_B$ is constant and nonzero. Let $f= \tilde{1}_{B^{-1}}*g_\gamma$, then we have $$f= r_B\tilde{1}_{B^{-1}B} + \sum_{\substack{U\in \mathcal{F} \\ U \not= B}} r_U \tilde{1}_{B^{-1}U}.$$ Hence, $\text{supp}(f) \cap \Sigma^{(0)} \not=0$ by Lemma \ref{lemma1}. Moreover, since every $U \in \mathcal{F}$ is a subset of $\Sigma_\gamma$  and $B \subseteq \Sigma_\gamma$ for each $\gamma \in \Gamma$, $B^{-1}U \subseteq \Sigma_{\gamma^{-1}\gamma}=\Sigma_\epsilon.$ Also, $\Sigma^{(0)} \subseteq \Sigma_\epsilon$. Hence, $$\text{supp}(f) \subseteq \Sigma^{(0)} \cap B^{-1}U \subseteq \Sigma_\epsilon.$$ Applying $q$, we get $q(\text{supp}(f)) \subseteq q(\Sigma_\epsilon)=G_\epsilon$ and $f \in A_R(G;\Sigma)_\epsilon.$ Since we identify $A_R(G;\Sigma)_\epsilon$ with $A_R(G_\epsilon; \Sigma_\epsilon)$ (see proof of \cite[Proposition~3.2]{naingue}), $f \in A_R(G_\epsilon; \Sigma_\epsilon)$.\end{proof}

\begin{thm}{(Generalised Graded Uniqueness Theorem)} \label{GGUT}
    Let $G$ be an ample Hausdorff groupoid, let $R$ be a unital commutative ring, let $\Gamma$ be a discrete group with identity $\epsilon$, and let $(\Sigma,G, \Gamma)$ be a graded disrete $R$-twist. If $Q$ is a graded ring and $\pi: A_R(G;\Sigma) \to Q$ is a  graded homomorphism, then $\pi~\text{is injective}$ if and only if  for every nonzero $$f \in A_R(\text{Iso}(G_\epsilon)^\circ;\text{Iso}(\Sigma_\epsilon)^\circ), \pi(\iota_\epsilon(\iota(f))) \not= 0$$  where $\iota_\epsilon:A_R(G_\epsilon;\Sigma_\epsilon) \to A_R(G;\Sigma)$ and $\iota: A_R(\text{Iso}(G_\epsilon)^\circ;\text{Iso}(\Sigma_\epsilon)^\circ) \to A_R(G_\epsilon;\Sigma_\epsilon)$.
\end{thm}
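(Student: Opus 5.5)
The forward implication is immediate. If $\pi$ is injective, then so is $\pi\circ\iota_\epsilon\circ\iota$, because $\iota_\epsilon$ and $\iota$ are inclusions of subalgebras by \cref{4.1} and \cref{cor:intisotwist} (applied to the twist $\Sigma_\epsilon$ over $G_\epsilon$). For the converse I would argue by contraposition, as in \cref{GUT}: assume $\ker\pi\neq 0$ and produce a nonzero $f\in A_R(\text{Iso}(G_\epsilon)^\circ;\text{Iso}(\Sigma_\epsilon)^\circ)$ with $\pi(\iota_\epsilon(\iota(f)))=0$.

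\emph{Step 1: a homogeneous element of the kernel.} Since $\pi$ is a graded homomorphism, $\ker\pi$ is a graded ideal. Indeed, if $g=\sum_\gamma g_\gamma\in\ker\pi$, then $\sum_\gamma\pi(g_\gamma)=0$ with $\pi(g_\gamma)\in Q_\gamma$, so each $\pi(g_\gamma)=0$. Hence there is a nonzero $g_\gamma\in\ker\pi\cap A_R(G;\Sigma)_\gamma$ for some $\gamma\in\Gamma$.

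\emph{Step 2: reduction to degree $\epsilon$.} By \cref{gradedlemma} there is a compact open bisection $B\subseteq\Sigma_\gamma$ on which $g_\gamma$ is constant and nonzero. Then $f_0:=\tilde{1}_{B^{-1}}*g_\gamma$ lies in $A_R(G_\epsilon;\Sigma_\epsilon)$ and is nonzero on $s(B)\subseteq\Sigma^{(0)}$, by \cref{lemma1}. Since $\ker\pi$ is an ideal, $f_0\in\ker(\pi\circ\iota_\epsilon)$. So we now have a nonzero element of $A_R(G_\epsilon;\Sigma_\epsilon)$ killed by $\pi\circ\iota_\epsilon$, whose support meets the unit space $\Sigma_\epsilon^{(0)}=\Sigma^{(0)}$ in a nonempty open set.

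\emph{Step 3: compression into the isotropy.} Apply \cref{GUT} to the twist $G_\epsilon^{(0)}\times R^\times\to\Sigma_\epsilon\to G_\epsilon$ of \cref{4.1} and the ring homomorphism $\pi\circ\iota_\epsilon$. Concretely, I would pick $u\in\text{supp}(f_0)\cap\Sigma^{(0)}$ with $(\Sigma_\epsilon)^u_u\subseteq\text{Iso}(\Sigma_\epsilon)^\circ$. Then \cref{afterdense}, applied inside $\Sigma_\epsilon$ with $\gamma_u=u$, gives a compact open $K\ni u$ with
\[
0\neq\tilde{1}_Kf_0\tilde{1}_K\in\iota\bigl(A_R(\text{Iso}(G_\epsilon)^\circ;\text{Iso}(\Sigma_\epsilon)^\circ)\bigr).
\]
This element still lies in $\ker(\pi\circ\iota_\epsilon)$, which contradicts the hypothesis.

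\emph{Main obstacle.} The difficulty is the choice of $u$ in Step 3. This requires the set $X_{\Sigma_\epsilon}$ of units whose $\Sigma_\epsilon$-isotropy lies in $\text{Iso}(\Sigma_\epsilon)^\circ$ to meet the open set $\text{supp}(f_0)\cap\Sigma^{(0)}$, i.e.\ essentially density of $X_{\Sigma_\epsilon}$, which is the hypothesis of \cref{GUT}. I would first check whether this can be read off from the structure of $G_\epsilon$ itself. Failing that, I would make the density of $X_{\Sigma_\epsilon}$ explicit as a hypothesis, or obtain it from \cref{dense} by assuming $G$ (hence the clopen subgroupoid $G_\epsilon$) is second-countable. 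Once that density is available, the rest is the routine reduction above.
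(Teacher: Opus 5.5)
Your Steps 1 and 2 match the paper's reduction exactly. You pass to a homogeneous component of the kernel, then to $f_0=\tilde 1_{B^{-1}}*g_\gamma\in A_R(G_\epsilon;\Sigma_\epsilon)$. (Incidentally, $f_0\neq 0$ on $s(B)$ is just the computation $f_0(s(b))=g_\gamma(b)$ for $b\in B$; \cref{lemma1} is not needed.) The genuine gap is the one you flag in Step 3. You need some $u\in\operatorname{supp}(f_0)\cap\Sigma^{(0)}$ with $(\Sigma_\epsilon)^u_u\subseteq\text{Iso}(\Sigma_\epsilon)^\circ$, and nothing in the statement supplies it. None of your remedies closes this. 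Adding density or second countability proves a weaker theorem. Density also cannot be read off from $G_\epsilon$ in general: take the transformation groupoid of the Cantor set under its full homeomorphism group with the discrete topology, with trivial twist and grading. Every point is fixed by a homeomorphism that is not the identity near it, so $X_{\Sigma_\epsilon}$ is empty. The paper's proof has the same hole: it simply cites \cref{GUT} for the twist over $G_\epsilon$ without checking that theorem's density hypothesis.

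The missing idea is that global density is never needed. You only need a unit that is good for the finitely many bisections occurring in $f_0$.
\begin{itemize}
\item Write $f_0=\sum_{D\in\mathcal F}a_D\tilde 1_D$ with $\mathcal F$ a finite set of compact open bisections of $\Sigma_\epsilon$, and put $F_D=s(D\cap\text{Iso}(\Sigma_\epsilon))$.
\item Since $\Sigma$ is Hausdorff and $D$ is compact, $F_D$ is closed in $\Sigma^{(0)}$. Because $D$ is a bisection, the interior of $F_D$ is $s(D\cap\text{Iso}(\Sigma_\epsilon)^\circ)$.
\item Hence $N=\bigcup_{D\in\mathcal F}(F_D\setminus F_D^\circ)$ is a \emph{finite} union of closed sets with empty interior, so it has empty interior. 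No Baire category or countability argument is involved. So you can pick $u\in(\operatorname{supp}(f_0)\cap\Sigma^{(0)})\setminus N$.
\item For each $D$, either $u\in F_D^\circ$, and then a compact open $V_D\ni u$ inside $F_D^\circ$ gives $V_DDV_D\subseteq D\cap\text{Iso}(\Sigma_\epsilon)^\circ$. Or $u\notin F_D$, and then $(u,u)$ lies outside the compact set $\{(r(\eta),s(\eta)):\eta\in D\}$, so some compact open $V_D\ni u$ has $V_DDV_D=\varnothing$.
\item Set $K=\bigcap_{D}V_D$. The element $\tilde 1_Kf_0\tilde 1_K=\sum_D a_D\tilde 1_{KDK}$ lies in $\ker(\pi\circ\iota_\epsilon)$, is supported in $\text{Iso}(\Sigma_\epsilon)^\circ$, and takes the value $f_0(u)\neq0$ at $u$.
\end{itemize}
This is just the proof of \cref{afterdense}, which only ever uses $D\cap\Sigma^u_u\subseteq\text{Iso}(\Sigma)^\circ$ for $D\in\mathcal F$. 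With this, your argument (and \cref{GUT} itself) goes through without any density hypothesis.
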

 \begin{proof}
      Suppose $\pi$ is not injective and $g \in \text{ker}~\pi$ where $g \in A_R(G;\Sigma) \setminus \{0\}$. Since $A_R(G;\Sigma)$ is graded, we can write $g = \sum_{\substack{\gamma \in F}} g_\gamma$ where $F$ is a finite subset of $\Gamma$ and $g_\gamma \in A_R(G;\Sigma)_\gamma$ for each $\gamma \in \Gamma$. Since $\pi$ is graded and each graded component in $Q$ are linearly independent, $\sum_{\gamma \in F}\pi(g_\gamma)=0$ implies $\pi(g_\gamma)=0$ for each $\gamma \in \Gamma$. Fix $\gamma \in \Gamma$ and $a \in \Sigma_\gamma$ such that $g_\gamma(a) \not=0$. Since $g_\gamma$ is locally constant and $\Sigma_\gamma$ is open, there exists a compact open bisection $B \subseteq \Sigma_\gamma$ where $g|_B$ is constant and nonzero.
        Define $f=\tilde{1}_{B^{-1}}*g_\gamma$. By Lemma \ref{gradedlemma}, $f \in A_R(G_\epsilon;\Sigma_\epsilon)$, that is, $\iota_\epsilon(f)=0$ and $\iota_\epsilon$ is a homomorphism that is not injective.  By Theorem \ref{GUT}, there exists $f \in A_R(\text{Iso}(G)^\circ;\text{Iso}(\Sigma)^\circ)$ such that $\pi(\iota_\epsilon(\iota(f))) = 0$.
 \end{proof}   
The next result is an extension of \cite[Theorem~7.2]{ACCLMR} to a discrete twist setting. We show that the uniqueness theorem for $A_R(G;\Sigma)$ where the homogeneous component with degree $\epsilon$ is effective follows from Theorem \ref{GGUT}.

\begin{cor}{(Graded Uniqueness Theorem)}\label{GradedUT}
     Let $G$ be an ample Hausdorff groupoid, let $R$ be a unital commutative ring  and let $\Gamma$ be a discrete group with identity $\epsilon$. Suppose that $(\Sigma, G, \Gamma)$ is a graded discrete $R$-twist and the subgroupoid $G_\epsilon$ is effective. If $Q$ is a $\Gamma$-graded ring and $\pi:A_R(G;\Sigma) \to Q$ is a graded ring homomorphism, then $\pi$ is injective if and only if $\pi(r\tilde{1}_K) \not=0$ for every non-empty compact open subset $K$ of $\Sigma^{(0)}$ and $r \in R$.
\end{cor}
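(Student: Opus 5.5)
The plan is to deduce the corollary from \cref{GGUT}, in the same way that \cref{CK1} was deduced from \cref{GUT}. The condition should be read with $r \in R\setminus\{0\}$; for $r=0$ the element $r\tilde{1}_K$ is zero and the condition cannot hold.

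The forward direction is immediate. If $\pi$ is injective, then since $r\tilde{1}_K \neq 0$ for $K$ nonempty and $r\neq 0$, we get $\pi(r\tilde{1}_K)\neq 0$.

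For the converse, the first step is to identify the isotropy interior of $G_\epsilon$. By \cref{4.1}, $\Sigma_\epsilon$ is a discrete twist over $G_\epsilon$, and $G_\epsilon^{(0)}=G^{(0)}$ because $c_G$ is a homomorphism and so sends units to $\epsilon$. Since $G_\epsilon$ is effective, $\text{Iso}(G_\epsilon)^\circ = G^{(0)}$. Arguing as in the proof of \cref{cor:intisotwist} (with $q$ open and a homeomorphism on unit spaces), we get
$$\text{Iso}(\Sigma_\epsilon)^\circ = q^{-1}(\text{Iso}(G_\epsilon)^\circ)\cap \Sigma_\epsilon = q^{-1}(G^{(0)}) = i(G^{(0)}\times R^\times).$$
As in the remark preceding \cref{CK1}, this gives
$$A_R(\text{Iso}(G_\epsilon)^\circ;\text{Iso}(\Sigma_\epsilon)^\circ)\cong A_R(\Sigma^{(0)}),$$
where the isomorphism sends $\tilde{1}_K$ to $1_K$ for compact open $K\subseteq \Sigma^{(0)}$. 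Under the inclusions $\iota_\epsilon\circ\iota$, the element $\tilde{1}_K$ goes to $\tilde{1}_K\in A_R(G;\Sigma)$.

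The second step is to show that $\pi\circ\iota_\epsilon\circ\iota$ is injective, and then conclude by \cref{GGUT}. Take a nonzero $h\in A_R(\Sigma^{(0)})$. Since $h$ is locally constant with compact support, we can write $h=\sum_{j=1}^n r_j 1_{W_j}$ with the $W_j$ nonempty, mutually disjoint compact open subsets of $\Sigma^{(0)}$ and the $r_j\neq 0$. This is the main obstacle: we must show $\pi(h)\neq 0$, and the paper's proof of \cref{CK1} passes quickly from $\pi(r_j\tilde{1}_{W_j})\neq 0$ for each $j$ to $\pi(h)\neq 0$, which does not follow directly. The fix is to use disjointness and multiply by an idempotent. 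Because the $W_j$ are disjoint, $\tilde{1}_{W_1}* h = r_1\tilde{1}_{W_1}$ (the convolution of functions on unit-space bisections is their pointwise product). Hence
$$\pi(\tilde{1}_{W_1})\,\pi(h)=\pi(r_1\tilde{1}_{W_1})\neq 0$$
by hypothesis, so $\pi(h)\neq 0$.

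Thus $\pi(\iota_\epsilon(\iota(f)))\neq 0$ for every nonzero $f\in A_R(\text{Iso}(G_\epsilon)^\circ;\text{Iso}(\Sigma_\epsilon)^\circ)$, and \cref{GGUT} gives that $\pi$ is injective. The only remaining bookkeeping is checking that the identification $A_R(G^{(0)};\Sigma^{(0)})\cong A_R(\Sigma^{(0)})$ is compatible with the inclusion maps, which follows from contravariance: a contravariant function supported on $q^{-1}(G^{(0)})$ is determined by its values on $\Sigma^{(0)}$.
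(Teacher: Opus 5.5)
Your proposal is correct and follows the paper's route: use Theorem~\ref{GGUT} to reduce to injectivity on $A_R(\text{Iso}(G_\epsilon)^\circ;\text{Iso}(\Sigma_\epsilon)^\circ)\cong A_R(\Sigma^{(0)})$, then use the hypothesis on the elements $r\tilde{1}_K$. Where the paper just cites Corollary~\ref{CK1} (which assumes $G$ effective, not available here, and whose proof jumps from $\pi(r_i\tilde{1}_{W_i})\neq 0$ to $\pi(h)\neq 0$), your disjoint decomposition and multiplication by the idempotent $\tilde{1}_{W_1}$ supply the missing step; your corrections that $r\neq 0$ is needed and that $\text{Iso}(\Sigma_\epsilon)^\circ=i(G^{(0)}\times R^\times)$ rather than $\Sigma^{(0)}$ are also right.
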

\begin{proof}
Since $\text{Iso}(G_\epsilon)^\circ=G^{(0)}$ by definition and $q$ is a local homeomorphism restricting to homeomorphism of unit spaces,  $\text{Iso}(\Sigma_\epsilon)^\circ= \Sigma^{(0)}.$ Hence, $$A_R(\text{Iso}(G_\epsilon)^\circ;\text{Iso}(\Sigma_\epsilon)^\circ) = A_R(G^{(0)};\Sigma^{(0)}) \cong A_R(\Sigma^{(0)}).$$  
By Theorem \ref{GGUT}, $\pi$ is injective if and only if $\pi(f) \not=0$ for every nonzero $f \in A_R(\Sigma^{(0)})$. Thus, the result follows from Corollary \ref{CK1}.
    \end{proof}

    Lastly, we recover \cite[Theorem~7.2]{ACCLMR} as a consequence of Corollary \ref{GradedUT} by virtue of \cite[Example~4.5]{ACCLMR}.

\begin{cor}
    Let $G$ be an ample Hausdorff groupoid, let $R$ be a unital commutative ring, and let $\sigma: G^{(2)} \to R^\times$ be a continuous $2$-cocycle. Let $\Gamma$ be a discrete group with identity $\epsilon$, and suppose that $c: G \to \Gamma$ is a continuous groupoid homomorphism such that the subgroupoid $G_\epsilon$ is effective. Suppose $Q$ is a $\Gamma$-graded ring and that $\pi: A_R(G;\sigma) \to Q$ is a graded ring homomorphism. Then $\pi$ is injecive if and only if $\pi(1_K)\not=0$ for every non-empty compact open subset $K$ of $G^{(0)}$.
\end{cor}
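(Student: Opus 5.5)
The plan is to reduce to \cref{GradedUT} through the passage from cocycles to discrete twists in \cite[Example~4.5]{ACCLMR}, and then to handle the gap between the hypothesis $\pi(1_K)\neq 0$ and the hypothesis $\pi(r\tilde 1_K)\neq 0$ used in \cref{GradedUT}. I expect that gap to be the one real obstacle. In fact, for an arbitrary unital commutative ring it cannot be bridged, so the "if" direction needs an extra hypothesis on $R$.

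\textbf{Step 1 (the twist and its grading).} Let $\Sigma_\sigma = G\times R^\times$ with the multiplication twisted by $\sigma$, as in \cite[Example~4.5]{ACCLMR}. The maps $i(x,t)=(x,t)$ and $q(\alpha,t)=\alpha$ make this a discrete twist over $G$. There is an isomorphism $A_R(G;\sigma)\cong A_R(G;\Sigma_\sigma)$ given by $f\mapsto \tilde f$ with $\tilde f(\alpha,t)=t^{-1}f(\alpha)$. Set $c_\Sigma = c\circ q$. This is a continuous homomorphism, so $(\Sigma_\sigma,G,\Gamma)$ is a graded discrete twist, and the isomorphism above is graded because supports are preserved under $q$. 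For compact open $K\subseteq G^{(0)}$, the set $K\times\{1\}$ is a compact open subset of $\Sigma^{(0)}$, and $1_K$ corresponds to $\tilde 1_{K\times\{1\}}$. Here I use that $\sigma$ is normalised, so that $\sigma(x,x)=1$ on units. Every compact open subset of $\Sigma^{(0)}$ has this form.

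\textbf{Step 2 (applying \cref{GradedUT}).} Compose $\pi$ with the graded isomorphism of Step 1. Since $G_\epsilon$ is effective, \cref{GradedUT} shows that $\pi$ is injective if and only if $\pi(r1_K)\ne0$ for every nonempty compact open $K\subseteq G^{(0)}$ and every $r\in R\setminus\{0\}$. The "only if" direction of the corollary then follows by taking $r=1$.

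\textbf{Step 3 (the obstacle: from $r=1$ to general $r$).} For the "if" direction I must show that $\pi(r1_K)=0$ forces $\pi(1_K)=0$. If $r$ is a unit, this is immediate from the identity $r^{-1}1_K * r1_K = 1_K$, which again uses normalisation of $\sigma$ on $G^{(0)}$. So the argument goes through whenever $R$ is a field, which is the setting of \cref{sigma} and the one I will adopt.

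For a general ring the statement as written fails. Take $G$ to be a single point, $\Gamma$ trivial, $\sigma\equiv1$, $R=\mathbb Z$, and let $\pi:\mathbb Z\to\mathbb Z/2$ be the quotient map. Then $\pi(1_K)\ne0$ for the only nonempty $K$, yet $\pi$ is not injective. My plan is therefore to prove the corollary under the additional assumption that $R$ is a field. Alternatively, one can keep $R$ arbitrary and replace the condition by $\pi(r1_K)\ne0$ for all $r\ne0$, which is exactly what \cref{GradedUT} delivers.
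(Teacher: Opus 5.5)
Your reduction is the same as the paper's, which deduces the statement in one line from \cref{GradedUT} via the cocycle-to-twist construction of \cite[Example~4.5]{ACCLMR}. Your Steps 1--2 carry this out correctly, reading the condition in \cref{GradedUT} as ranging over $r\in R\setminus\{0\}$, as it must.

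Your Step 3 is also right, and it catches something the paper's one-line deduction passes over. \cref{GradedUT} gives the criterion ``$\pi(r1_K)\neq 0$ for every nonempty compact open $K\subseteq G^{(0)}$ and every $r\neq 0$''. This reduces to $\pi(1_K)\neq 0$ only when every nonzero $r$ is a unit. Your example ($G$ a single point, $\Gamma$ trivial, $\sigma\equiv 1$, $R=\mathbb{Z}$, $\pi\colon\mathbb{Z}\to\mathbb{Z}/2$) satisfies every hypothesis of the statement. It therefore shows that the ``if'' direction, as written, is false for a general unital commutative ring.

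The corollary thus needs one of two corrections: either assume $R$ is a field, as in \cref{sigma}, or state the condition for all nonzero $r$. Under either correction your proof is complete, and the ``only if'' direction holds in general.
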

    
\bibliographystyle{amsplain}
\bibliography{references}

\end{document}